\newtheorem{theo}{}
\newtheorem{theor}[theo]{Theorem}
\newtheorem{prop}[theo]{Proposition}
\newtheorem{lemma}[theo]{Lemma}
\newtheorem{remark}[theo]{Remark}
\newenvironment{proof}
	{\par {\bf Proof:}}
 	{\hfill $\square$ \medskip}
\newcommand{\ds}{\displaystyle}
\newcommand{\mc}{\mathcal}
\newcommand{\mf}{\mathfrak}
\newcommand{\la}{\langle}
\newcommand{\ra}{\rangle}
\newcommand{\om}{\omega}
\newcommand{\lam}{\lambda}
\newcommand{\vp}{\varphi}
\newcommand{\be}{\begin{equation}}
\newcommand{\ee}{\end{equation}}
\newcommand\B{\mathcal B}
\renewcommand{\H}{\mc{H}}
\newcommand\LL{\mathcal L}
\newcommand{\M}{\mc{M}}
\newcommand{\RR}{\mc{R}}
\newcommand{\U}{\mc{U}}
\newcommand{\Ds}{\mc{D}^\sharp}
\newcommand{\Pss}{\mc{P}^\sharp}
\newcommand{\Df}{\mc{D}^\flat}
\newcommand{\Pff}{\mc{P}^\flat}
\newcommand\R{\mathbb R}
\title{Admissible vectors and Hilbert algebras}
\author{F. G\'omez-Cubillo$^1$, S. Wickramasekara$^2$\footnote{In memoriam.}}
\begin{document}

\maketitle
\begin{center}
{\small $^1$Dpto de An\'alisis Matem\'atico, IMUVa, Universidad de Valladolid, Facultad de Ciencias, 47011 Valladolid, Spain. fgcubill@am.uva.es.} 

{\small $^2$Department of Physics, Grinnell College, Grinnell, IA 50112, United States. wickrama@grinnell.edu.}
\end{center}

\begin{abstract}
Admissible vectors for unitary representations of locally compact groups are  the basis for group-frame and covariant coherent state expansions.
Main tools in the study of admissible vectors have been Plancherel and central integral decomposition, of applicability only under certain separability and semifiniteness restrictions. In this work we present a study of admissible vectors in terms of convolution Hilbert algebras valid for arbitrary unitary representations of general locally compact groups.

\smallskip
{\it Keywords:} locally compact group, unitary representation, admissible vector, Hilbert algebra, weight, frame, coherent state

\smallskip
{\it 2010 MSC:} 43A65, 47L30
\end{abstract}

\section{Introduction} 

Let $G$ be a locally compact group (lc group, for brevity) with fixed left Haar measure $ds$ and let $\pi$ be a (continuous) unitary representation of $G$ on a Hilbert space $\H_\pi$ with inner product $(\cdot|\cdot)$. An element $\eta\in\H_\pi$ is called an {\it admissible vector} for $\{\pi,\H_\pi\}$ if the operator
$$
L_\eta:\H_\pi\to L^2(G),\quad [L_\eta\psi](s)=(\psi|\pi(s)\eta)\,,
$$
is a bounded map and $L_\eta^*L_\eta=I_{\H_\pi}$, where $I_{\H_\pi}$ denotes the identity operator on $\H_\pi$.
Admissible vectors lead to (weak) resolutions of the identity in terms of the orbit $\pi(G)\eta\subset\H_\pi$:
$$
I_{\H_\pi}=\int_G |\pi(s)\eta)(\pi(s)\eta|\,ds\,.
$$
Relations of this type are known as {\it covariant coherent state expansions} in mathematical physics or {\it group frame expansions} in harmonic analysis; see e.g. \cite{AAG14-2,Chr} and references therein. 

Admissible vectors have been extensively studied restricting attention to  special Lie groups \cite{Per72,Per86,AAG14-2}, irreducible representations \cite{DM76,GMP85,AAG14-2}, second countable groups of type I \cite{AFK03,Fuhr05}, unimodular separable groups \cite{Fuhr03,Bek04} and countable discrete groups \cite{HL00,Bek04,BHP15.1}.
Plancherel and central integral decompositions are main tools in these works, which involve severe separability and semifiniteness conditions (type I von Neumann algebras for Plancherel decompositions and type II for central ones). In this work we present a study of admissible vectors in terms of convolution Hilbert algebras valid for arbitrary unitary representations of general lc groups.

Section \ref{sectav} includes basic terminology, notation and well-known results about admissible vectors. 
Section \ref{sectcha} translates the concept of admissible vector to the context of the full right and left convolution Hilbert algebras associated to the lc group $G$. 
Admissible vectors are characterized in Theorems \ref{ipp2} and \ref{ipp2d}. Their relationship is analyzed in Proposition \ref{tav}. Unitary representations with admissible vectors are determined in Theorems \ref{th358} and \ref{cor360}. 
The results of Section \ref{sectcha} are closely related to the topics of {\it functions of positive type} and {\it square-integrable representations} of both groups and Hilbert algebras 
\cite{God48,Am49,Segal50,Rie69,Per71,Gil72,Phi73,Phi75,Dix77,Rie04,Bek04,Stet07,AAG14-2} and {\it reproducing kernel Hilbert spaces} \cite{Ar50,Carey76,Carey77,Carey78,AAG14-2}. In these works similar results  can be found in diverse contexts, using different techniques and under a great variety of conditions. But, until our knowledge, there is not in the literature a concise approach to admissible vectors for arbitrary unitary representations of general lc groups. The results in Section \ref{sectcha} suggest that perhaps the best way to achieve such concise general approach must consider Hilbert algebras. 
Section \ref{sectsfw} explores the left-right duality of admissible vectors driven by the modular conjugation for a nonunimodular lc group $G$ 
(Theorems \ref{ipp2} and \ref{ipp2d} reflect such duality).
It is shown how the {\it standard form} \cite{Haage75,TII} of the left (and right) von Neumann algebra of $G$ dissolves the duality in such a way  that each admissible vector is associated with a (unique) cyclic and separating vector for the corresponding reduced von Neumann algebras relative to their center; see Theorem \ref{thcon3}. 
Finally, Theorem \ref{t9.1.12x} describes the very natural form of the {\it orthogonality relations} in this context.
Several connections with previous concepts and results in the literature shall be detailed in the Remarks along the work.
Examples of application shall be given elsewhere.

\section{Admissible vectors}\label{sectav} 

Let $G$ be a lc group with fixed {left Haar measure} $ds$ and {modular function} $\delta_G$. 
Recall that $\delta_G$ is a continuous homomorphism from $G$ to the multiplicative group $\R^+$ that satisfies the following relations:
\be\label{emf}
d(st)=\delta_G(t)\,ds\,,\quad d(s^{-1})=\delta_G(s^{-1})\,ds\,,\quad s,t\in G\,.
\ee
Let $C_c(G)$, $C(G)$ and $L^2(G)$ denote the spaces of complex-valued functions on $G$, respectively, continuous with compact support, continuous and square-integrable with respect to $ds$.

By a {\it unitary representation} of $G$ we mean a homomorphism $\pi$ from $G$ into the group $\U(\H_\pi)$ of unitary operators on some nonzero Hilbert space $\H_\pi$ that is continuous with respect to 
the strong (weak) operator topology \cite{F95}. 
We shall use the abbreviated notation $\{\pi,\H_\pi\}$ for the representation.
A vector $f\in\H_\pi$ is called a {\it cyclic vector} for $\{\pi,\H_\pi\}$ if $\la\pi(G)f\ra=\H_\pi$, where $\la\cdot\ra$ means $\overline{span}\{\cdot\}$. 
The {\it left} and {\it right regular representations} $\lam$ and $\rho$ of $G$ on $L^2(G)$ are defined by
\be\label{edlrr}
[\lam(s)f](t):=f(s^{-1}t),\quad s\in G,\,f\in L^2(G)\,,
\ee
\be\label{edrr}
[\rho(s)f](t):=\delta_G^{1/2}(s)f(ts),\quad s\in G,\,f\in L^2(G)\,.
\ee
The {\it left} and {\it right group von Neumann algebras} of $G$ shall be denoted by $\LL_G$ and $\RR_G$, i.e.,
\be\label{dflvna}
\LL_G:=\{\lam(s):s\in G\}'',\quad \RR_G:=\{\rho(s):s\in G\}''
\ee
(double commutant). One has $\LL_G'=\RR_G$. See e.g. \cite[Chapter VII]{TII}. 

The main objects of interest in this work are the {\it admissible vectors} for  unitary representations $\{\pi,\H_\pi\}$ of a lc group $G$. They were defined in the Introduction.
The following facts are well-known \cite{Rie69,Fuhr03}.
For the sake of completeness we include a very simple proof.

\begin{prop}\label{ill}
Let $G$ be a lc group and let $\{\pi,\H_\pi\}$ be a unitary representation of $G$. The following are equivalent:
\begin{enumerate}[(i)]
\item 
$\eta$ is an admissible vector for $\{\pi,\H_\pi\}$.
\item
$L_\eta$ is an isometry from $\H_\pi$ into $L^2(G)$.
\item
For every $\psi,\phi\in\H_\pi$,
\be\label{csexp}
\int_G (\psi|\pi(s)\eta)(\pi(s)\eta|\phi)\,ds=(\psi|\phi)\,.
\ee
\item
The range $L_\eta\H_\pi$ of $L_\eta$ is a closed invariant subspace of $L^2(G)$ for the left regular representation $\lam$ of $G$, $\pi$ is equivalent to the subrepresentation $\lam_{|L_\eta\H_\pi}$ and $L_\eta\eta$ is an admissible vector for $\lam_{|L_\eta\H_\pi}$.
\end{enumerate}
In particular, an admissible vector $\eta$ is a cyclic vector for $\{\pi,\H_\pi\}$.
\end{prop}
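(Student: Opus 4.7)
The plan is to run the cycle $(\text{i})\Leftrightarrow(\text{ii})\Leftrightarrow(\text{iii})\Rightarrow(\text{iv})\Rightarrow(\text{ii})$ and to deduce cyclicity as a corollary of (iii). Everything hinges on two elementary ingredients: the one-line intertwining identity
\[
[L_\eta(\pi(t)\psi)](s)=(\pi(t)\psi|\pi(s)\eta)=(\psi|\pi(t^{-1}s)\eta)=[\lam(t)L_\eta\psi](s),
\]
so that $L_\eta\pi(t)=\lam(t)L_\eta$ for every $t\in G$, and the standard operator-theoretic fact that a bounded map $L$ between Hilbert spaces satisfies $L^*L=I$ iff $L$ is an isometry.

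The equivalence (i) $\Leftrightarrow$ (ii) is precisely this standard fact. For (ii) $\Leftrightarrow$ (iii), unfolding the $L^2(G)$-norm gives $\|L_\eta\psi\|_{L^2(G)}^2=\int_G|(\psi|\pi(s)\eta)|^2\,ds$, so (ii) is the diagonal case $\phi=\psi$ of (iii), with polarization recovering the full sesquilinear form of (iii) (and, conversely, the diagonal case of (iii) also certifies that $L_\eta\psi\in L^2(G)$). For (ii) $\Rightarrow$ (iv), the intertwining identity makes $L_\eta\H_\pi$ both $\lam$-invariant and, as the range of an isometry, closed; $L_\eta$ itself then implements a unitary equivalence between $\pi$ and $\lam_{|L_\eta\H_\pi}$. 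Admissibility of $L_\eta\eta$ for this subrepresentation reduces to the direct computation
\[
[L_{L_\eta\eta}^{\lam_{|L_\eta\H_\pi}}(L_\eta\psi)](s)=(L_\eta\psi|\lam(s)L_\eta\eta)_{L^2(G)}=(L_\eta\psi|L_\eta\pi(s)\eta)_{L^2(G)}=(\psi|\pi(s)\eta)=[L_\eta\psi](s),
\]
which exhibits $L_{L_\eta\eta}^{\lam_{|L_\eta\H_\pi}}$ as the identity inclusion of $L_\eta\H_\pi$ into $L^2(G)$, hence as an isometry. For (iv) $\Rightarrow$ (ii), interpreting the equivalence as the one provided by the obvious intertwiner $L_\eta$ itself makes $L_\eta$ a unitary onto its range, giving (ii) at once. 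Cyclicity is then immediate from (iii): if $\psi\perp\pi(G)\eta$, the diagonal case $\phi=\psi$ of (iii) forces $\|\psi\|^2=0$.

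The step I expect to need the most care is (iv) $\Rightarrow$ (ii). If (iv) is read with an abstract unitary equivalence, then running the above calculation in reverse for $\phi=L_\eta\psi$ only yields $(T\psi|\psi)=(T^3\psi|\psi)$ with $T:=L_\eta^*L_\eta$, hence $T$ is a self-adjoint projection onto $(\ker L_\eta)^\perp$; in infinite dimensions this does not automatically collapse to $T=I$ without identifying the equivalence in (iv) with $L_\eta$ itself. The cleanest account takes this identification as implicit in the statement of (iv), consistent with how the equivalence is produced in the (ii) $\Rightarrow$ (iv) step.
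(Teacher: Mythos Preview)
Your proof is correct and follows the same approach as the paper: polarization for (i)$\Leftrightarrow$(ii)$\Leftrightarrow$(iii), the intertwining identity $\lam(s)L_\eta=L_\eta\pi(s)$ for the passage to (iv), and the same orthogonality argument for cyclicity. The paper is in fact terser than you on (iv)$\Rightarrow$(i)---it only explicitly argues (i)$\Rightarrow$(iv) via ``since $L_\eta:\H_\pi\to L_\eta\H_\pi$ is unitary, one has the equivalence between (i) and (iv)'' and leaves the converse implicit---so your flagging of that subtlety and your resolution by reading the equivalence in (iv) as the one implemented by $L_\eta$ itself is, if anything, more careful than the original.
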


\begin{proof}
Since the conditions (1) $L_\eta^*L_\eta=I_{\H_\pi}$, (2) $(L_\eta^*L_\eta\psi|\phi)=(\psi,\phi)$ for all $\psi,\phi\in\H_\pi$, (3) $(L_\eta^*L_\eta\psi|\psi)=(\psi,\psi)$ for all $\psi\in\H_\pi$, (4) $||L_\eta\psi||^2=||\psi||^2$ for all $\psi\in\H_\pi$ are mutually equivalent (the equivalence between (2) and (3) follows by polarization), $\eta$ is admissible if and only if $L_\eta$ is an isometry. In such case, the range $L_\eta\H_\pi$ of $L_\eta$ is a closed subspace of $L_2(G)$ and $L_\eta L_\eta^*=P_{L_\eta\H_\pi}$, the orthogonal projection onto $L_\eta\H_\pi$. Condition (2) above is just (iii). 

Now, by the definition of $L_\eta$, for $\psi\in\H_\pi$ and $s,t\in G$,
\begin{eqnarray*}
[\lam(s)L_\eta \psi](t)&=&[L_\eta \psi](s^{-1}t)=(\psi|\pi(s^{-1}t)\eta)\\
&=&(\pi(s)\psi|\pi(t)\eta)=[L_\eta\pi(s)\psi](t)\,,
\end{eqnarray*}
so that
\be\label{iplp}
\lam(s)L_\eta=L_\eta\pi(s),\quad s\in G\,.
\ee
Taking adjoints,
$$
L_\eta^*\lam(s)=\pi(s)L_\eta^*,\quad s\in G\,.
$$
Thus,
$$
P_{L_\eta\H_\pi}\lam(s)=\lam(s)P_{L_\eta\H_\pi},\quad s\in G\,.
$$
This means that the range $L_\eta\H_\pi$ is a closed invariant subspace of the left regular representation $\lam$ and, moreover, since $L_\eta:\H_\pi\to L_\eta\H_\pi$ is unitary, one has the equivalence between (i) and (iv).

Now, suppose that $\eta$ admissible vector for $\{\pi,\H_\pi\}$ which is not a cyclic vector. Then, there exists $0\neq\psi\in \la\pi(G)\eta\ra^\perp$, so that $(\psi|\pi(s)\eta)=0$ for all $s\in G$ and (\ref{csexp}) cannot be satisfied. 
\end{proof}

\begin{remark}\rm
Equation (\ref{csexp}) leads to an expression for $L_\eta^*$ (the ``partial inverse" of $L_\eta$): since every $f\in L_\eta\H_\pi\subset L^2(G)$ is of the form $f(s)=(\psi|\pi(s)\eta)$ for some $\psi\in\H_\pi$,
$$
L_\eta^*(f)=\left\{\begin{array}{ll} \ds\int_G f(s)\,\pi(s)\eta\,ds,&\text{ if }f\in L_\eta\H_\pi\,,\\0,&\text{ if }f\in (L_\eta\H_\pi)^\perp\,,
\end{array}\right.
$$
where the integral must be interpreted in weak sense.
On the other hand, equation (\ref{csexp}) with $\phi=\psi=\eta$ implies that
$||\eta||_{\H_\pi}=\int_G |(\eta|\pi(s)\eta)|^2\,ds$. 
In particular, $\int_G |(\eta|\pi(s)\eta)|^2\,ds<\infty$. This is the original condition in the definition of an admissible vector given by Grossmann, Morlet and Paul in \cite{GMP85}.
Moreover, the constants $c_\eta$ leading to the Duflo-Moore operator \cite{DM76} in \cite[Th.3.1]{GMP85}, are here
$$
c_\eta=\frac{1}{||\eta||_{\H_\pi}}\int_G |(\eta|\pi(s)\eta)|^2\,ds=1\,.
$$
\end{remark}

In what follows,
for an admissible vector $\eta$ for $\{\pi,\H_\pi\}$ we will write
\be\label{enave}
g_\eta:=L_\eta\eta,\quad \H_\eta:=L_\eta\H_\pi\,.
\ee

\section{Admissible vectors and Hilbert algebras}\label{sectcha} 

As before, let $G$ be a lc group with fixed {left Haar measure} $ds$ and {modular function} $\delta_G$. 
In order to study admissible vectors in terms of convolution Hilbert algebras, let us begin by introducing some terminology and notation  mainly borrowed from Takesaki \cite{TII} and Phillips \cite{Phi73}. The convolution product $f\ast g$ and involutions $f\mapsto f^\sharp$ and $f\mapsto f^\flat$ are defined in $C_c(G)$ by 
\be\label{ecsf}
\begin{array}{rcl}
[f\ast g](s)&:=&\int_G f(t)g(t^{-1}s)\,dt,\quad s\in G\,,\\
f^\sharp(s)&:=&\delta_G(s^{-1})\,\overline{f(s^{-1})},\quad s\in G\,,\\
f^\flat(s)&:=&\overline{f(s^{-1})},\quad s\in G\,,
\end{array}
\ee
where the bar denotes complex conjugation. The involutions $f\mapsto f^\sharp$ and $f\mapsto f^\flat$ can be extended to the domains
\be\label{eddisf}
\begin{array}{l}
\Ds:=\big\{f\in L^2(G):\int_G \delta_G(s)\,|f(s)|^2\,ds<\infty\big\}\,,\\[2ex]
\Df:=\big\{f\in L^2(G):\int_G \delta_G^{-1}(s)\,|f(s)|^2\,ds<\infty\big\}\,.
\end{array}
\ee
The corresponding extensions are closed densely defined operators on $L^2(G)$ and bijective involutions on their own domains. It is usual to denote these extensions by $S$ and $F$, i.e.,
\be\label{edisf}
S:\Ds\to\Ds:f\mapsto f^\sharp,\quad F:\Df\to\Df:g\mapsto g^\flat\,.
\ee

The {\it modular operator} $\Delta$ is defined by
\be\label{dmo}
\left.\begin{array}{r}
\Delta:\mc{D}_\Delta\to L^2(G),\quad [\Delta f](s):=\delta_G(s)f(s)\,,\\[1ex]
\ds\mc{D}_\Delta=\big\{f\in L^2(G):\int_G \delta_G^2(s)\,|f(s)|^2\,ds<\infty\big\}\,,
\end{array}\right\}
\ee
and the {\it modular conjugation} $J$ is given by
\be\label{dmc}
J:L^2(G)\to L^2(G),\quad [Jf](s):=\delta_G^{-1/2}(s)\overline{f(s^{-1})}\,.
\ee
The {left} and {right regular representations} $\lam$ and $\rho$ of $G$ are connected by the modular conjugation $J$:
\be\label{llcon1}
J\rho(s)J=\lam(s),\quad J\lam(s)J=\rho(s),\quad s\in G\,.
\ee

A vector $g\in L^2(G)$ is said to be {\it right bounded} (resp. {\it left bounded}) if 
$$
\sup\{||f\ast g||:f\in C_c(G),\,||f||\leq 1\}<\infty
$$ 
$$
\text{(resp. }\sup\{||g\ast f||:f\in C_c(G),\,||f||\leq 1\}<\infty\text{)}\,,
$$ 
where $||\cdot||$ denotes the $L^2(G)$-norm.
The set of all right (resp. left) bounded vectors is denoted by $\B'$ (resp. $\B$). 
Convolution products on the right $f\mapsto f\ast g$ extend to bounded operators $\pi_r(g)$ on $L^2(G)$ for elements $g$ of $\B'$ and, in a similar way, convolution products on the left $f\mapsto g\ast f$ extend to bounded operators $\pi_l(g)$ on $L^2(G)$ for elements $g$ of $\B$, i.e.,
$$
\pi_r(g)f:=f\ast g,\quad g\in\B',\,f\in L^2(G)\,,
$$
$$
\pi_l(g)f:=g\ast f,\quad g\in\B,\,f\in L^2(G)\,.
$$
The sets 
\be\label{dlfha}
\U':=\B'\cap\Df\quad \text{and}\quad \U'':=\B\cap\Ds
\ee 
are the {\it full right} and {\it left convolution Hilbert algebras} of $G$. 
See \cite[Chapter VI]{TII} for details.

An element $e\in\U'$ (resp. $e\in\U''$) is called {\it right self-adjoint idempotent} (resp. {\it left self-adjoint idempotent}) if $e=e^\flat=e\ast e$ (resp. $e=e^\sharp=e\ast e$). Denote by $\mc{E}'$ (resp. $\mc{E}''$) the set of nonzero right (resp. left) self-adjoint idempotents of $\U'$ (resp. $\U''$).
Since $\pi_r(g)^*=\pi_r(g^\flat)$ for $g\in\U'$, one has that $e\in\mc{E}'$ if and only if $\pi_r(e)$ is an orthogonal projection on $L^2(G)$. In a similar way, $e\in\mc{E}''$ if and only if $\pi_l(e)$ is an orthogonal projection on $L^2(G)$.

The following result is written in terms of the involution $F$ and the convolution product --see Equations (\ref{ecsf})--(\ref{edisf})--.
  
\begin{prop}\label{ppcon} 
Let $\eta$ be an admissible vector for $\{\pi,\H_\pi\}$. Then:
\begin{enumerate}[(i)]
\item
$\H_\eta\subset C(G)\cap L^2(G)$.
\item
Let $f\in L^2(G)$. Then, $f\in \H_\eta$ if and only if
$$
f(s)=f\ast g_\eta^\flat(s),\quad s\in G\,.
$$
If $f\in \H_\eta^\perp$, then $f\ast g_\eta^\flat=0$.
\item
$g_\eta=g_\eta\ast g_\eta^\flat=g_\eta^\flat$.
\end{enumerate}
\end{prop}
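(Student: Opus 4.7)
The plan hinges on identifying the orthogonal projection $P_{\H_\eta}=L_\eta L_\eta^*$ with right convolution by $g_\eta^\flat$; once that identity is in hand, (ii) and the second half of (iii) are immediate. The preliminary computation I would carry out is
$$(\pi(t)\eta|\pi(s)\eta)=(\eta|\pi(t^{-1}s)\eta)=g_\eta(t^{-1}s),$$
together with the tautology $g_\eta^\flat=g_\eta$ obtained from unitarity via $\overline{g_\eta(s^{-1})}=\overline{(\eta|\pi(s^{-1})\eta)}=(\eta|\pi(s)\eta)=g_\eta(s)$. This already gives the first equality of (iii), and combined with the previous display yields the useful identity $g_\eta^\flat(t^{-1}s)=(\pi(t)\eta|\pi(s)\eta)$.

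Part (i) is then immediate: for $f=L_\eta\psi\in\H_\eta$, the formula $f(s)=(\psi|\pi(s)\eta)$ is continuous in $s$ by strong continuity of $\pi$ together with Cauchy--Schwarz, while $f\in L^2(G)$ holds by the definition of admissibility.

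The heart of the argument is (ii). For any $f\in L^2(G)$ and $s\in G$, the identity above rewrites the convolution as
$$[f\ast g_\eta^\flat](s)=\int_G f(t)\,(\pi(t)\eta|\pi(s)\eta)\,dt,$$
which one recognises as the $L^2(G)$-inner product $(f|\lam(s)g_\eta)$. Using (\ref{iplp}) in the form $\lam(s)g_\eta=L_\eta\pi(s)\eta$ and the adjoint relation for $L_\eta$, this equals $(L_\eta^* f|\pi(s)\eta)_{\H_\pi}=[L_\eta L_\eta^* f](s)=[P_{\H_\eta}f](s)$. Hence right-convolution with $g_\eta^\flat$ agrees with $P_{\H_\eta}$ on all of $L^2(G)$, from which both halves of (ii) follow at once: $f\in\H_\eta$ iff $f=f\ast g_\eta^\flat$, and $f\in\H_\eta^\perp$ forces $f\ast g_\eta^\flat=0$.

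The remaining identity $g_\eta=g_\eta\ast g_\eta^\flat$ in (iii) is just (ii) applied to $g_\eta\in\H_\eta$. The one technical point to attend to is that the convolution $[f\ast g_\eta^\flat](s)$ should be interpreted as a pointwise absolutely convergent integral at each $s$, so that the pointwise identities are meaningful rather than merely almost-everywhere statements; Cauchy--Schwarz and admissibility bound the integrand by $\|f\|_{L^2}\|\lam(s)g_\eta\|_{L^2}=\|f\|_{L^2}\|g_\eta\|_{L^2}$, which settles this modest obstacle.
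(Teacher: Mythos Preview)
Your proof is correct and follows essentially the same line as the paper's: both identify $f\ast g_\eta^\flat(s)$ with $(f|\lambda(s)g_\eta)$ and then invoke the intertwining relation (\ref{iplp}). The only cosmetic differences are in order and packaging---you establish $g_\eta=g_\eta^\flat$ directly from unitarity at the outset (the paper derives it last via the manipulation $f\ast g_\eta^\flat=[g_\eta\ast f^\flat]^\flat$), and you handle both cases of (ii) simultaneously through $L_\eta L_\eta^*=P_{\H_\eta}$ rather than treating $\H_\eta$ and $\H_\eta^\perp$ separately.
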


\begin{proof}
(i) Each $f\in \H_\eta\subset L^2(G)$ is of the form $f(s)=L_\eta\psi=(\psi|\pi(s)\eta)$, $s\in G$, for some $\psi\in\H_\pi$. Since the representation $\pi$ is strong (weak) continuous, $\H_\eta\subset C(G)$

(ii) Let $f\in \H_\eta$. Using the intertwining relation (\ref{iplp}) one gets, for $s\in G$,
\be\label{efcgb}
\begin{array}{rcl}
f(s)&=&(\psi|\pi(s)\eta)=(\psi|L_\eta^*L_\eta\pi(s)\eta)\\[2ex]
&=&(L_\eta\psi|L_\eta\pi(s)\eta)=(L_\eta\psi|\lam(s)L_\eta\eta)\\[2ex]
&=&\ds (f|\lam(s)g_\eta)=\int_G f(t)\overline{g_\eta(s^{-1}t)}\,dt\\[1ex]
&=&\ds\int_G f(t)g_\eta^\flat(t^{-1}s)\,dt=f\ast g_\eta^\flat(s)\,.
\end{array}
\ee
If $f\in \H_\eta^\perp$, since $\H_\eta=\la\lam(s)g_\eta:s\in G\ra$, one has $
f\ast g_\eta^\flat(s)=(f|\lam(s)g_\eta)=0$ for $s\in G$.
Thus, for $f\in L^2(G)$ with $f\notin \H_\eta$, 
$$
f\ast g_\eta^\flat(s)=(f|\lam(s)g_\eta)=(P_{\H_\eta} f|\lam(s)g_\eta)=P_{\H_\eta} f(s)
$$
and the last expression must be different from $f(s)$ for some $s\in G$, since $P_{\H_\eta} f\neq f$. 

(iii) The first equality in (iii) is just (ii) with $f=g_\eta$; the second equality follows from
\be\label{ecfgb}
\begin{array}{rcl}
f\ast g_\eta^\flat(s)&=&\ds\int_G f(t)g_\eta^\flat(t^{-1}s)\,dt\\[2ex]
&=&\ds\int_G f(t)\overline{g_\eta(s^{-1}t)}\,dt\\[2ex]
&=&\ds\int_G f(st)\overline{g_\eta(t)}\,dt\\[2ex]
&=&\ds\int_G \overline{f^\flat(t^{-1}s^{-1})}\,\overline{g_\eta(t)}\,dt\\[2ex]
&=&\ds\overline{g_\eta\ast f^\flat(s^{-1})}=[g_\eta\ast f^\flat]^\flat(s)
\end{array}
\ee
with $f=g_\eta$. Note that these arguments prove implicitly that $g_\eta\in\Df$. Although we use the symbol $g_\eta^\flat$ in (ii), it is not assumed there, neither in (\ref{ecfgb}), that $g_\eta\in\Df$.

\end{proof}

\begin{remark}\label{rmrkhs}\rm
\begin{enumerate}[(a)]
\item
Proposition \ref{ppcon}.(ii) says that the range $\H_\eta$ is a {\it reproducing kernel Hilbert space} \cite{Ar50} with kernel 
$k_\eta(s,t):=g_\eta(s^{-1}t)$.
Obviously, $\H_\eta$ is invariant under the left regular representation $\lam$ of $G$.
\item
By Proposition \ref{ppcon}.(iii), 
$g_\eta\in\mc{P}(G)\cap L^2(G)\subset A(G)$,
where $\mc{P}(G)$ denotes the set of all continuous functions of positive type on $G$ (see e.g. Godement \cite{God48}), and $A(G)$ is the usual {\it Fourier algebra} of $G$ introduced by Eymard \cite{Ey64}. 
Recall that $A(G)$ is identified with the  predual $[\LL_G]_*$ of $\LL_G$ \cite[Th.3.10]{Ey64}. Se also \cite[Section VII.3]{TII} and \cite[Chapter 3]{ES92}.
\end{enumerate}
\end{remark}

The full convolution right Hilbert algebra $\U'$ defined in (\ref{dlfha}) permit us to identify the admissible vectors. The result is a straightforward consequence of Proposition \ref{ppcon}.

\begin{theor}\label{ipp2}
The following are equivalent:
\begin{enumerate}[(i)]
\item 
$\eta$ is an admissible vector for $\{\pi,\H_\pi\}$.
\item
$g_\eta\in \mc{E}'$ and $\pi_r(g_\eta)=P_{\H_\eta}$, where $P_{\H_\eta}$ denotes the orthogonal projection from $L^2(G)$ onto $\H_\eta$.
\end{enumerate}
\end{theor}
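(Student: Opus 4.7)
The plan is to extract both implications directly from Proposition \ref{ppcon}. For (i) $\Rightarrow$ (ii), assuming admissibility, Proposition \ref{ppcon}.(iii) reads $g_\eta = g_\eta \ast g_\eta^\flat = g_\eta^\flat$, which simultaneously delivers $g_\eta^\flat = g_\eta$ and the idempotent relation $g_\eta \ast g_\eta = g_\eta$. The parenthetical at the end of the proof of (iii) records $g_\eta \in \Df$, while the computation (\ref{efcgb}) combined with Proposition \ref{ppcon}.(ii) shows $f \ast g_\eta^\flat = P_{\H_\eta} f$ for every $f \in L^2(G)$. Thus $\pi_r(g_\eta^\flat) = P_{\H_\eta}$ is a bounded operator, so $g_\eta^\flat \in \B'$ and hence $g_\eta \in \B'$; combined with $g_\eta \in \Df$, this gives $g_\eta \in \U'$. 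Since $\eta \neq 0$ and $L_\eta$ is isometric, $g_\eta = L_\eta \eta \neq 0$, so $g_\eta \in \mc{E}'$, and using $g_\eta = g_\eta^\flat$ we read off $\pi_r(g_\eta) = P_{\H_\eta}$.

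For the converse (ii) $\Rightarrow$ (i), I would reverse-engineer the resolution of the identity (\ref{csexp}) from the operator equality $\pi_r(g_\eta) = P_{\H_\eta}$. Since $L_\eta \psi \in \H_\eta$ for every $\psi \in \H_\pi$, applying the projection yields $L_\eta \psi \ast g_\eta = L_\eta \psi$ pointwise. Unfolding the convolution and recognizing $g_\eta(t^{-1} s) = (\pi(t)\eta \,|\, \pi(s)\eta)$ transforms this into
\[
\int_G (\psi \,|\, \pi(t)\eta)(\pi(t)\eta \,|\, \pi(s)\eta)\, dt = (\psi \,|\, \pi(s)\eta),
\]
that is, (\ref{csexp}) tested against $\phi = \pi(s)\eta$. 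Linearity in $\phi$ promotes this to finite linear combinations of orbit vectors. The relations $\|L_\eta \pi(t)\eta\|^2 = \|g_\eta\|^2 = g_\eta(e) = \|\eta\|^2$ and $(\lam(t_1) g_\eta \,|\, \lam(t_2) g_\eta) = (\pi(t_1)\eta \,|\, \pi(t_2)\eta)$, both derived from the unconditional intertwining $L_\eta \pi(t)\eta = \lam(t) g_\eta$, show that $L_\eta$ is bounded, and continuity extends the identity to the closed span of $\pi(G)\eta$. The range identification $\H_\eta = L_\eta \H_\pi$ packaged into the hypothesis then closes the gap to all of $\H_\pi$, and polarization delivers $L_\eta^* L_\eta = I_{\H_\pi}$.

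The forward direction is essentially bookkeeping from Proposition \ref{ppcon}. The delicate point is the reverse: verifying that the operator identity $\pi_r(g_\eta) = P_{\H_\eta}$, on top of the algebraic data $g_\eta \in \mc{E}'$, really suffices to promote (\ref{csexp}) from the orbit of $\eta$ to the whole of $\H_\pi$ --- in particular to extract the cyclicity of $\eta$ that admissibility requires.
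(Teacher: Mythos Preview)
Your forward direction (i)$\Rightarrow$(ii) is essentially the paper's argument: both extract $g_\eta=g_\eta^\flat\in\Df$ and the projection identity $\pi_r(g_\eta)=P_{\H_\eta}$ straight from Proposition~\ref{ppcon}.(ii)--(iii).

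For (ii)$\Rightarrow$(i) the paper is more economical. Rather than unwinding the convolution back to (\ref{csexp}) on $\H_\pi$, the paper observes that $\pi_r(g_\eta^\flat)f=f$ for $f\in\H_\eta$ together with the identity $[f\ast g_\eta^\flat](s)=(f|\lam(s)g_\eta)$ says precisely that $g_\eta$ is an admissible vector for the subrepresentation $\{\lam_{|\H_\eta},\H_\eta\}$, and then invokes the equivalence (i)$\Leftrightarrow$(iv) of Proposition~\ref{ill}. Your computation is the same content written out by hand on the $\pi$ side instead of the $\lam$ side; what you gain is explicitness, what the paper gains is brevity.

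On the cyclicity worry you flag in your last paragraph: this is a genuine soft spot, but the paper's proof does not resolve it either. The paper's appeal to Proposition~\ref{ill}(iv) requires not only that $L_\eta\eta$ be admissible for $\lam_{|\H_\eta}$ (which is proved) but also that $\pi$ be equivalent to $\lam_{|\H_\eta}$ via $L_\eta$, which is read off from the notation $\H_\eta=L_\eta\H_\pi$ rather than derived from (ii). Your sentence ``the range identification \ldots\ closes the gap'' is doing exactly the same work. So your concern is well placed, but it is a feature of how condition (ii) is formulated (the symbols $g_\eta,\H_\eta$ presuppose $L_\eta$ is already an isometric embedding), not a defect of your argument relative to the paper's.
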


\begin{proof}
(i)$\Rightarrow$(ii): If $\eta$ be an admissible vector for $\{\pi,\H_\pi\}$, according to Proposition \ref{ppcon}.(iii), $g_\eta=g_\eta^\flat$, so that $g_\eta\in\Df$. Moreover, by Proposition \ref{ppcon}.(ii), $g_\eta^\flat\in \B'$ and the orthogonal projection $P_{\H_\eta}$ coincides with 
$$ 
\pi_r(g_\eta^\flat)=\pi_r(g_\eta)^*=\pi_r(g_\eta)=\pi_r(g_\eta\ast g_\eta^\flat)=\pi_r(g_\eta)^2.
$$ 

(ii)$\Rightarrow$(i):
If $g_\eta\in\U'$ and $\pi_r(g_\eta)=P_{\H_\eta}$, then $\pi_r(g_\eta)=\pi_r(g_\eta)^*=\pi_r(g_\eta^\flat)$ and
$\pi_r(g_\eta^\flat)f=f$ for $f\in L_\eta\eta$. Since
$$
[\pi_r(g_\eta^\flat)f](s)=[f\ast g_\eta^\flat](s)=(f|\lam(s)g_\eta),\quad s\in G,\,f\in \H_\eta\,,
$$
we have that $g_\eta$ is an admissible vector for $\{\lam_{|\H_\eta},\H_\eta\}$. The equivalence (i)$\Leftrightarrow$(iv) of Proposition \ref{ill} leads to the result.
\end{proof}

\begin{remark}\label{rm345}\rm
Close concepts to admissible vectors, as characterized in Theorem \ref{ipp2}, have been introduced along the literature:
\begin{enumerate}[(a)]
\item
For separable unimodular lc groups $G$, the orthogonal projections in $L^2(G)$ of the form $\pi_r(g)$, $g\in L^2(G)$, were called {\it finite projections} by Segal \cite{Segal50}, who established a generalization of the Plancherel formula using them \cite[Th.3]{Segal50}. 
An equivalent approach was given by Ambrose \cite{Am49} dealing with the so-called {\it $L^2$-systems} and {\it $\H$-systems}, precursors of the concept of Hilbert algebra.
A generalization to Hilbert algebras of the theory of square-integrable representations of unimodular lc groups can be found in Rieffel \cite{Rie69}, where {\it self-adjoint idempotents} in Hilbert algebras are treated along the lines developed by Ambrose \cite{Am49}.
Phillips \cite{Phi73} extended the theory to arbitrary full left Hilbert algebras.  
Stetkaer \cite{Stet07} expanded the study of square-integrable representations to representations induced from a character of a closed subgroup, mainly for unimodular groups. 
\item
For unimodular lc groups $G$, Carey \cite[Lemma 2.5]{Carey78} proved that for   every reproducing kernel Hilbert space $\H\subset L^2(G)$ invariant under the left regular representation $\lam$ of $G$ there exists $g\in L^2(G)$ such that $\pi_r(g)=P_\H$. See Remark \ref{rmrkhs}.b. 
\end{enumerate}
\end{remark}

Now, assume there exist two admissible vectors $\eta_1$ and $\eta_2$ for $\{\pi,\H_\pi\}$.
To simplify notation, let us put
$$
\H_1:=L_{\eta_1}\H_\pi,\quad \H_2:=L_{\eta_2}\H_\pi\,.
$$
We want to analyze the connections between the equivalent representations $\{\lam_{|\H_1},\H_1\}$ and $\{\lam_{|\H_2},\H_2\}$. Obviously, the unitary intertwining operator between them is $L_{\eta_2}L_{\eta_1}^*$.
Given $\psi\in\H_\pi$ we shall write
$$
f=L_{\eta_1}\psi,\quad \tilde f=L_{\eta_2}\psi\,.
$$
so that $\tilde f=L_{\eta_2}L_{\eta_1}^*f$. In particular, for $\eta_1$ and $\eta_2$ we will use the notation:
$$
g_1=L_{\eta_1}\eta_1,\quad \tilde g_1=L_{\eta_2}\eta_1\,,
$$
$$
g_2=L_{\eta_1}\eta_2,\quad \tilde g_2=L_{\eta_2}\eta_2\,.
$$
One has
\be\label{eqtr}
\tilde g_1(s)=(\eta_1|\pi(s)\eta_2)=\overline{(\eta_2|\pi(s^{-1})\eta_1)}=g_2^\flat(s)\,,
\ee
so that $g_2\in\Df$. By construction of $\{\lam_{|\H_2},\H_2\}$ from $\{\lam_{|\H_1},\H_1\}$ and reasoning as in (\ref{efcgb}),
$$
\tilde f(s)=(f|\lam(s)g_2)=f\ast g_2^\flat(s),\quad s\in G,\,f\in \H_1\,.
$$
In particular, by (\ref{eqtr}),
\be\label{eg2tg2}
\tilde g_2=g_2\ast g_2^\flat=\tilde g_1^\flat\ast\tilde g_1\,.
\ee
Interchanging the roles of  $\{\lam_{|\H_1},\H_1\}$ and $\{\lam_{|\H_2},\H_2\}$,
\be\label{eg1g2}
g_1=g_2^\flat\ast g_2\,.
\ee
To summarize,
$$
\begin{array}{rcl}
L_{\eta_2}L_{\eta_1}^*:\H_1&\to& \H_2\\
f=f\ast g_1^\flat=\tilde f\ast \tilde g_1^\flat&\mapsto& \tilde f=f\ast g_2^\flat=\tilde f\ast\tilde g_2^\flat\\
g_1=g_2^\flat\ast g_2&\mapsto& \tilde g_1=g_2^\flat\\
g_2=\tilde g_1^\flat&\mapsto& \tilde g_2=\tilde g_1^\flat\ast\tilde g_1\,.
\end{array}
$$
Assume, in addition, that $g_2=g_2^\flat$. Then, by (\ref{eg2tg2}) and (\ref{eg1g2}),
$$
g_1=g_2^\flat\ast g_2=g_2\ast g_2^\flat=\tilde g_2\,.
$$
This implies that $\pi_r(g_1)=\pi_r(\tilde g_2)$, that is, $P_{\H_1}=P_{\H_2}$ and $\H_1=\H_2$.

In particular, we have proved the following result.

\begin{prop}\label{tav}
Let $\eta$ be an admissible vector for $\{\pi,\H_\pi\}$ and $g_\eta=L_\eta\eta=(\eta|\pi(\cdot)\eta)$. Let $\xi\in \H_\pi$ and put $\xi_\eta:=L_\eta\xi=(\xi|\pi(\cdot)\eta)$. Then 
\begin{enumerate}[(i)]
\item
$\xi$ is an admissible vector for $\{\pi,\H_\pi\}$ if and only if $\xi_\eta\in\Df$ and $g_\eta=\xi_\eta^\flat\ast \xi_\eta$.
\item
If $\xi$ is an admissible vector for $\{\pi,\H_\pi\}$ and $\xi_\eta^\flat=\xi_\eta$, then
$$
(\eta|\pi(\cdot)\eta)=(\xi|\pi(\cdot)\xi)\quad\text{and}\quad\H_\xi=\H_\eta\,.
$$ 
\end{enumerate}  
\end{prop}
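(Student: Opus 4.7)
The plan is to specialize the analysis immediately preceding the proposition by setting $\eta_1:=\eta$ and $\eta_2:=\xi$, so that $g_1=g_\eta$, $g_2=\xi_\eta$, $\tilde g_1=L_\xi\eta$ and $\tilde g_2=g_\xi$. With this identification, the forward direction of (i) and all of (ii) will fall out of formulas already derived there, and only the converse of (i) requires a genuinely new argument.

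For the forward direction of (i), the assumption that $\xi$ is admissible forces $L_\xi\eta\in L^2(G)$, which by (\ref{eqtr}) coincides with $\xi_\eta^\flat$; hence $\xi_\eta\in\Df$, and (\ref{eg1g2}) supplies $g_\eta=\xi_\eta^\flat *\xi_\eta$. For (ii), combining the hypothesis $\xi_\eta^\flat=\xi_\eta$ with (\ref{eg2tg2}) and this same identity gives $g_\xi=\xi_\eta *\xi_\eta^\flat=\xi_\eta^\flat *\xi_\eta=g_\eta$, which is the first conclusion; the equality $\H_\xi=\H_\eta$ then follows from Theorem \ref{ipp2}, since $\pi_r(g_\xi)=\pi_r(g_\eta)$ forces their ranges $\H_\xi$ and $\H_\eta$ to coincide.

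For the converse of (i), rather than attack $L_\xi$ directly on all of $\H_\pi$---which would first require showing that $\psi\mapsto(\psi|\pi(\cdot)\xi)$ takes values in $L^2(G)$---I would exploit the cyclicity of $\eta$ and work on the dense subspace of finite linear combinations of elements of $\pi(G)\eta$. A short computation gives $L_\xi(\pi(t)\eta)(s)=\overline{\xi_\eta(s^{-1}t)}=\xi_\eta^\flat(t^{-1}s)$, so $L_\xi(\pi(t)\eta)=\lambda(t)\xi_\eta^\flat\in L^2(G)$. The hypothesis $g_\eta=\xi_\eta^\flat *\xi_\eta$ next yields the kernel identity $(\xi_\eta^\flat|\lambda(r)\xi_\eta^\flat)=g_\eta(r)$ for every $r\in G$: unfolding both sides and applying the substitution $s\mapsto s^{-1}$ reduces them to the same integral, whose absolute convergence is precisely what the $\Df$-condition on $\xi_\eta$ provides. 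With this identity in hand, for any finite combination $\psi=\sum_i c_i\pi(t_i)\eta$ one computes
$$
\|L_\xi\psi\|_{L^2(G)}^2=\sum_{i,j}c_i\overline{c_j}(\xi_\eta^\flat|\lambda(t_i^{-1}t_j)\xi_\eta^\flat)=\sum_{i,j}c_i\overline{c_j}g_\eta(t_i^{-1}t_j)=\|\psi\|_{\H_\pi}^2,
$$
so $L_\xi$ is isometric on a dense subspace and extends uniquely to an isometry on $\H_\pi$ agreeing pointwise with the original formula; hence $\xi$ is admissible.

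The step I expect to be the main obstacle is the kernel identity: interchanging the $\flat$-operation, $\lambda$-translation, and convolution requires careful tracking of the modular function $\delta_G$, and verifying that the change of variables converges absolutely is exactly where the hypothesis $\xi_\eta\in\Df$ becomes essential. Once this identity is in place, every remaining step reduces to routine manipulations of the type already pioneered in the paragraph preceding the proposition.
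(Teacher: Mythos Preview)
Your argument is correct. For the forward direction of (i) and all of (ii) you are doing exactly what the paper does: it simply declares ``In particular, we have proved the following result'' and relies on the preceding discussion with $\eta_1=\eta$, $\eta_2=\xi$, so that (\ref{eqtr}), (\ref{eg2tg2}), (\ref{eg1g2}) yield $\xi_\eta\in\Df$, $g_\eta=\xi_\eta^\flat\ast\xi_\eta$, and, under $\xi_\eta=\xi_\eta^\flat$, the identity $g_\eta=\tilde g_2=g_\xi$ and hence $\H_\eta=\H_\xi$ via $\pi_r(g_\eta)=\pi_r(g_\xi)$.

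Where you genuinely go beyond the paper is the converse of (i). The paper's discussion assumes from the outset that \emph{both} $\eta_1$ and $\eta_2$ are admissible, so it never establishes that $\xi_\eta\in\Df$ together with $g_\eta=\xi_\eta^\flat\ast\xi_\eta$ forces $\xi$ to be admissible; the claim ``we have proved'' is, strictly speaking, not fully supported for that implication. Your cyclicity argument closes this gap cleanly: the computation $L_\xi(\pi(t)\eta)=\lambda(t)\xi_\eta^\flat$ is correct, and the kernel identity $(\xi_\eta^\flat\,|\,\lambda(r)\xi_\eta^\flat)=g_\eta(r)$ is in fact immediate---both sides unfold to the same integral $\int_G\overline{\xi_\eta(t^{-1})}\,\xi_\eta(t^{-1}r)\,dt$, with no substitution or $\delta_G$-bookkeeping required (the hypothesis $\xi_\eta\in\Df$ is what makes $\xi_\eta^\flat\in L^2(G)$ so that the inner product exists). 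The isometry then extends from $\mathrm{span}\,\pi(G)\eta$ to $\H_\pi$ and agrees a.e.\ with $s\mapsto(\psi|\pi(s)\xi)$ by the usual subsequence argument. So the ``main obstacle'' you anticipated is actually milder than you feared, and your route supplies a piece the paper leaves implicit.
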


We pass to characterize the unitary representations of $G$ with admissible vectors. 
The following results are almost implicit in the proofs of Takesaki \cite[Lemma 3.3]{Tak70} and Phillips \cite[Th.3.5 and Prop.4.2]{Phi73}.

\begin{prop}\label{llrrep}
Let $G$ be a lc group and $\{\lam_{|\H_0},\H_0\}$ a subrepresentation of the left regular representation $\lam$ of $G$. If $\H_0\cap\Df\neq\{0\}$, then
$\H_0\cap\mc{E}'\neq\emptyset$. If, in addition, $\{\lam_{|\H_0},\H_0\}$ is irreducible, then $\H_0\cap\mc{E}'=\{e\}$.
\end{prop}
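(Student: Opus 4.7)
The strategy is to realize a nonzero projection $p \in \RR_G$ with range in $\H_0$ as $p = \pi_r(e)$ for some $e \in \mc{E}'$; automatically $e = e \ast e = \pi_r(e) e$ lies in the range of $p$, hence in $\H_0$, yielding $e \in \H_0 \cap \mc{E}'$. The structural input is that $P_{\H_0} \in \LL_G' = \RR_G$ because $\H_0$ is $\lam$-invariant, so any subprojection of $P_{\H_0}$ in $\RR_G$ automatically has range inside $\H_0$.

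To exploit the hypothesis $\H_0 \cap \Df \neq \{0\}$, I would start from a nonzero $f \in \H_0 \cap \Df$ and smooth it into a better-behaved element $f_0 \in \H_0 \cap \U'$ by convolving with elements of $C_c(G) \subset \U' \cap \U''$ on the left (which preserves $\H_0$ via $\pi_l(\phi) \in \LL_G$). The symmetrized element $h := f_0^\flat \ast f_0 \in \U'$ is then $\flat$-self-adjoint, and $\pi_r(h) = \pi_r(f_0) \pi_r(f_0)^* \in \RR_G$ is positive, nonzero, and has range in $\H_0$. The decisive step, and the main obstacle, is to extract a self-adjoint idempotent in $\U'$ out of $h$: for $a > 0$ in the nonzero spectrum of $\pi_r(h)$, the spectral projection $p_a := E_{[a,\infty)}(\pi_r(h))$ is a finite projection for the Plancherel weight on $\RR_G$, and the Hilbert algebra machinery of Takesaki \cite[Chapter VI]{TII} and Phillips \cite[Th.3.5, Prop.4.2]{Phi73} identifies $p_a$ with $\pi_r(e)$ for a unique $e \in \mc{E}'$, with $e$ obtained by applying the corresponding bounded Borel function of $\pi_r(h)$ to $h$ on the algebra side. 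Since $p_a \leq P_{\H_0}$, the observation of the first paragraph yields the desired $e \in \H_0 \cap \mc{E}'$.

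For uniqueness in the irreducible case, let $e \in \H_0 \cap \mc{E}'$. Then $\pi_r(e)$ is a nonzero projection in $\RR_G$ whose range is a $\lam$-invariant closed subspace of $\H_0$; Schur's lemma applied to the irreducible $\lam|_{\H_0}$ forces this range to be all of $\H_0$, so $\pi_r(e) = P_{\H_0}$. The map $\U' \ni g \mapsto \pi_r(g)$ is injective, since for any approximate identity $(u_\alpha) \subset C_c(G)$, $\pi_r(g) u_\alpha = u_\alpha \ast g \to g$ in $L^2(G)$, so $\pi_r(g) = 0$ forces $g = 0$. Hence $e$ is uniquely determined by $\pi_r(e) = P_{\H_0}$.
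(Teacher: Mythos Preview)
Your overall strategy and your uniqueness argument in the irreducible case are essentially the paper's: produce a nonzero projection in $\RR_G$ dominated by $P_{\H_0}$, realize it as $\pi_r(e)$ for some $e\in\mc{E}'$, and observe $e=\pi_r(e)e\in\H_0$.

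The genuine gap is the smoothing step. Left convolution by $\phi\in C_c(G)$ does preserve $\H_0$ (since $\pi_l(\phi)\in\LL_G$) and $\Df$ (since $(\phi\ast f)^\flat=f^\flat\ast\phi^\flat$ with $\phi^\flat\in C_c(G)\subset\B'$), but there is no reason it lands in $\B'$. For $h\in C_c(G)$ one has $h\ast(\phi\ast f)=\bar\pi_r(f)\big(\pi_r(\phi)h\big)$, and $\bar\pi_r(f)$ is in general an \emph{unbounded} operator; the bounded $\pi_r(\phi)$ neither maps into its domain nor tames its growth. Without $f_0\in\B'$ the element $f_0^\flat\ast f_0$ need not lie in $\U'$, the identity $\pi_r(h)=\pi_r(f_0)\pi_r(f_0)^*$ is unavailable, and your spectral argument has no bounded positive operator in $\RR_G$ to start from. (Note also that ``$p_a$ is finite for the Plancherel weight'' is essentially equivalent to ``$p_a=\pi_r(e)$ for some $e$'', so invoking it is circular unless you supply the mechanism.)

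The paper confronts the unboundedness directly rather than smoothing it away: from $0\neq g\in\H_0\cap\Df$ it takes the closure $A=\bar\pi_r(g)$ (closable by \cite{Per71}), forms the polar decomposition $A=VH=KV$ with $H,K$ affiliated to $\RR_G$, and for bounded Borel $\phi$ sets $g_\phi:=A\phi(H)g^\flat$. Direct computation shows $g_\phi\in\U'$ with $\pi_r(g_\phi)=K^2\phi(K)$; choosing $\phi(t)=\chi_\alpha(t)/t^2$ for closed $\alpha\subset(0,\infty)$ meeting $\sigma(H)$ gives $\pi_r(g_\phi)=E_K(\alpha)$, a nonzero projection, while $g_\phi\in\overline{\text{rang}}(A)=\la\pi_l(\U'')g\ra\subset\H_0$. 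In short, the functional calculus you defer to a citation is the entire technical content, and it must be carried out on the unbounded $\bar\pi_r(g)$, not on a bounded $\pi_r(h)$ manufactured by an unjustified convolution.
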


\begin{proof}
Let $0\neq g\in\H_0\cap\Df$. By \cite[Prop.2.6]{Per71}, $\pi_r(g)$ is closable  and we can consider its closure $A:=\bar\pi_r(g)$. Let 
$A=VH=KV$
be the polar decomposition of $A$, where $H=(A^*A)^{1/2}$ and $K=(AA^*)^{1/2}$. Then $H$ and $K$ are positive and affiliated with $\RR_G=\LL_G'$ and $V\in\RR_G$; see e.g. \cite[Th.6.1.11]{KRII}.
Let $\phi$ be a Borel bounded function of a real variable. By spectral theory, $A\phi(H)$ is a bounded operator on $L^2(G)$. Put
$$
g_\phi:=A\phi(H)g^\flat\,.
$$
We have, for each $f\in C_c(G)$,
\begin{eqnarray*}
(f|g_\phi)&=&(f|A\phi(H)g^\flat) =(\bar\phi(H)A^*f|g^\flat)\\
&=&(\bar\phi(H)\pi_l(f)g^\flat|g^\flat)=(\pi_l(f)\bar\phi(H)g^\flat|g^\flat)\\
&=&(\bar\phi(H)g^\flat|\pi_l(f^\sharp)g^\flat)=
(\bar\phi(H)g^\flat|\pi_r(g^\flat)f^\sharp)\\
&=&(A\bar\phi(H)g^\flat|f^\sharp)\,.
\end{eqnarray*}
Thus, $g_\phi\in\Df$ and $g_\phi^\flat=A\bar\phi(H)g^\flat$; see \cite[VI.1.5.ii]{TII}. 
Futhermore, for $f\in C_c(G)$,
\begin{eqnarray*}
\pi_l(f)g_\phi&=&\pi_l(f)A\phi(H)g^\flat=A\phi(H)\pi_l(f)g^\flat\\
&=&A\phi(H)A^*f=K^2\phi(K)f\,,
\end{eqnarray*}
the last equality due to $A\phi(H)\supseteq \phi(K)A$.
This shows that $g_\phi\in\B'$ and 
$$
\pi_r(g_\phi)=K^2\phi(K)\,.
$$
In particular, if $\alpha\subset(0,\infty)$ is a closed set  such that $\alpha\cap\sigma(H)\neq\emptyset$ and we consider the function $\phi(t)=\chi_\alpha(t)/t^2$, $t\in\R$, then, $g_\phi\in \U'$, $g_\phi=g_\phi^\flat$ and $\pi_r(g_\phi)=E_K(\alpha)$, where $E_K$ denotes the spectral measure associated to $K$. Thus, $g_\phi\in\mc{E}'$.
Moreover, $g_\phi\in \overline{\text{rang}}(A)=\overline{\text{rang}}(\bar\pi_r(g))=\la\bar\pi_r(g)\U''\ra=\la\pi_l(\U'')g\ra$ (recall that $\la\cdot\ra$ means $\overline{span}\{\cdot\}$). Since $\H_0$ is invariant under $\LL_G$ and $\pi_l(\U'')\subset\LL_G$, we get $g_\phi\in\H_0$.

Now, assume in addition that $\{\lam_{|\H_0},\H_0\}$ is irreducible. If there exist $e_1\neq e_2$ in $\H_0\cap\mc{E}'$, then, by \cite[Lem.2.2]{Per71}, the projections $\pi_r(e_1)$ and $\pi_r(e_2)$ do not coincide, neither the subspaces  $\pi_r(e_1)(L^2(G))$ and $\pi_r(e_2)(L^2(G))$. Since $\pi_r(e_i)(L^2(G))=\la\pi_l(U'')e_i\ra$, $i=1,2$, are invariant subspaces of $\H_0$ for the left regular representation $\lam$, this contradicts the fact that  $\{\lam_{|\H_0},\H_0\}$ is irreducible. Thus, $\H_0\cap\mc{E}'=\{e\}$.
\end{proof}

As in the proof of Proposition \ref{llrrep}, for $0\neq g\in\Df$ we consider the closure $\bar\pi_r(g)$ of $\pi_r(g)$ and
$$
|\bar\pi_r(g)|:=(\bar\pi_r(g)^*\bar\pi_r(g))^{1/2}\,.
$$
Functional calculus derived from spectral theory is applied to $|\bar\pi_r(g)|$
.
\begin{theor}\label{th358}
Let $\{\pi,\H_\pi\}$ be a unitary representation of a lc group $G$. The following assertions are equivalent:
\begin{enumerate}[(i)]
\item
$\{\pi,\H_\pi\}$ has an admissible vector.
\item
$\{\pi,\H_\pi\}$ is equivalent to a subrepresentation of $\lam$, $\{\lam_{|\H_0},\H_0\}$, with a cyclic vector $g\in\Df$ such that $0$ does not belong to the spectrum $\sigma(|\bar\pi_r(g)|)$ of $|\bar\pi_r(g)|$ or $0$ is an isolated point of $\sigma(|\bar\pi_r(g)|)$.
\end{enumerate}  
In such case, $\bar\pi_r(g)|\bar\pi_r(g)|^{-2}g^\flat$ is an admissible vector for $\{\lam_{|\H_0},\H_0\}$.
\end{theor}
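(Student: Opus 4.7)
My approach is to use Theorem \ref{ipp2} as the bridge in both directions: admissibility of $\eta$ for $\{\pi,\H_\pi\}$ is equivalent, after transporting to $L^2(G)$ via $L_\eta$, to producing a self-adjoint idempotent $e\in\mc{E}'$ with $\pi_r(e)=P_{\H_0}$, where $\H_0\subseteq L^2(G)$ is the invariant subspace under $\lam$ realizing $\{\pi,\H_\pi\}$. Direction (i)$\Rightarrow$(ii) is then essentially an unpacking of what is already known about $g_\eta$. Direction (ii)$\Rightarrow$(i) I would carry out by the spectral construction from the proof of Proposition \ref{llrrep}, specialized to the Borel function $\phi(t)=1/t^2$ on $\sigma(H)\setminus\{0\}$; the spectral hypothesis on $0$ is precisely what makes this function bounded.

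For (i)$\Rightarrow$(ii), I take $\H_0:=\H_\eta$ and $g:=g_\eta$. Proposition \ref{ill}(iv) gives $\{\pi,\H_\pi\}\cong\{\lam_{|\H_\eta},\H_\eta\}$ via the intertwiner $L_\eta$, and cyclicity of $\eta$ in $\H_\pi$ transfers to cyclicity of $g$ in $\H_\eta$. Proposition \ref{ppcon}(iii) gives $g=g^\flat\in\Df$, and Theorem \ref{ipp2} gives $\bar\pi_r(g)=\pi_r(g)=P_{\H_\eta}$, so $|\bar\pi_r(g)|=P_{\H_\eta}$ has spectrum contained in $\{0,1\}$; either $0$ is absent from that spectrum or it is isolated.

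For (ii)$\Rightarrow$(i), put $A:=\bar\pi_r(g)$, $H:=(A^*A)^{1/2}$, $K:=(AA^*)^{1/2}$. The spectral hypothesis means $\sigma(H)\setminus\{0\}$ is bounded away from $0$, so defining $\phi(t)=1/t^2$ on that set and $\phi(0):=0$ yields a bounded Borel function, making $\phi(H)=|\bar\pi_r(g)|^{-2}$ a bounded operator via the functional calculus. Define
\[
e := A\phi(H)g^\flat \;=\; \bar\pi_r(g)\,|\bar\pi_r(g)|^{-2}\,g^\flat.
\]
The calculation in the proof of Proposition \ref{llrrep} applies verbatim and gives $e\in\U'$, $e=e^\flat$ (since $\phi$ is real), and $\pi_r(e)=K^2\phi(K)=E_K(\sigma(K)\setminus\{0\})$. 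The new ingredient beyond Proposition \ref{llrrep} is the cyclicity of $g$, which I use to identify that spectral projection with $P_{\H_0}$: for $f\in C_c(G)$ one has $Af=f\ast g\in\la\lam(G)g\ra=\H_0$, and conversely every $\lam(s)g$ is an $L^2$-limit of such convolutions (approximate identities), so $\overline{\mathrm{ran}}(A)=\H_0$; hence $(\ker K)^\perp=\H_0$ and $E_K(\sigma(K)\setminus\{0\})=P_{\H_0}$. Therefore $e\in\mc{E}'$ with $\pi_r(e)=P_{\H_0}$, and Theorem \ref{ipp2} declares $e$ an admissible vector for $\{\lam_{|\H_0},\H_0\}$; transporting $e$ through the assumed unitary equivalence $\{\pi,\H_\pi\}\cong\{\lam_{|\H_0},\H_0\}$ produces an admissible vector for $\{\pi,\H_\pi\}$.

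The delicate point is the very meaning of $|\bar\pi_r(g)|^{-2}$: since $A$ is generally unbounded and $0$ may lie in $\sigma(H)$, the spectral inverse is in general unbounded, and only the isolation-or-absence assumption on $0$ guarantees that $\phi(H)$ is bounded so that $e$ is genuinely an element of $L^2(G)\cap\U'$. Once that spectral gap is available, the rest of the argument is a direct functional-calculus refinement of the bookkeeping in Proposition \ref{llrrep}, plus the cyclicity-driven identification of the resulting projection with $P_{\H_0}$.
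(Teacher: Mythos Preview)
Your proof is correct and follows essentially the same route as the paper's: both directions hinge on Theorem~\ref{ipp2}, and for (ii)$\Rightarrow$(i) you specialize the spectral construction of Proposition~\ref{llrrep} with $\phi(t)=t^{-2}$ on $\sigma(H)\setminus\{0\}$, exactly as the paper does. Your treatment is in fact slightly more explicit than the paper's in justifying $\overline{\mathrm{ran}}(A)=\H_0$ via approximate identities and in spelling out why the spectral gap at $0$ is what makes $\phi(H)$ bounded.
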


\begin{proof}
(i)$\Rightarrow$(ii): 
This implication follows from Theorem \ref{ipp2}. Indeed, put $g=g_\eta$ and $\H_0=\H_\eta$. Since $\pi_r(g)=|\pi_r(g)|=P_{\H_0}$, one has $\sigma(|\pi_r(g)|)=\{1\}$ or $\sigma(|\pi_r(g)|)=\{0,1\}$, and, since $g\in\H_0$, 
$$
\pi_r(g)|\pi_r(g)|^{-2}g^\flat=\pi_r(g)^{-1}g=P_{\H_0}^{-1}g=g\,.
$$ 

(ii)$\Rightarrow$(i):
Put $A:=\bar\pi_r(g)$ and consider its polar decomposition
$A=VH=KV$.  
Let $\phi$ be the function of a real variable satisfying $t^2\phi(t)=\chi_\alpha(t)$, where $\chi_\alpha$ denotes the characteristic function of a closed set $\alpha\subset(0,\infty)$ such that $\sigma(\bar\pi_r(g))\backslash\{0\}\subseteq\alpha$. 
Let us consider 
$$
g_\phi:=A\phi(H)g^\flat\,.
$$
Then, reasoning as in the proof of Proposition \ref{llrrep}, we deduce that  
$g_\phi\in\mc{E}'$ and $\pi_r(g_\phi)=P_{\H_\phi}$, where $\H_\phi=\la\pi_l(\U'')g\ra$. That $\la\pi_l(\U'')g\ra=\H_0$ follows from the facts that $g$ is a cyclic vector for $\{\lam_{|\H_0},\H_0\}$ and $\sigma(\bar\pi_r(g))\backslash\{0\}\subseteq\alpha$. Thus, 
$$
g_\phi=AH^{-2}g^\flat=\bar\pi_r(g)|\bar\pi_r(g)|^{-2}g^\flat
$$
is an admissible vector for $\{\lam_{|\H_0},\H_0\}$.
\end{proof}

\begin{theor}\label{cor360}
Let $\{\pi,\H_\pi\}$ be an irreducible unitary representation of a lc group $G$. The following are equivalent:
\begin{enumerate}[(i)]
\item
$\{\pi,\H_\pi\}$ has an admissible vector.
\item
$\{\pi,\H_\pi\}$ is equivalent to an irreducible subrepresentation of $\lam$, $\{\lam_{|\H_0},\H_0\}$, such that $\H_0\cap\Df\neq\emptyset$.
\end{enumerate}  
In such case, $\H_0\cap\mc{E}'=\{e\}$ and $e$ is an admissible vector for $\{\lam_{|\H_0},\H_0\}$.
\end{theor}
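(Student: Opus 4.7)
The proof is essentially a direct application of the machinery already assembled, so the plan is to combine Proposition \ref{ill}, Proposition \ref{llrrep}, and Theorem \ref{ipp2} carefully.

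For (i)$\Rightarrow$(ii), I would start with an admissible vector $\eta$ for $\{\pi,\H_\pi\}$ and invoke Proposition \ref{ill}(iv) to conclude that $\pi$ is equivalent to $\{\lam_{|\H_\eta},\H_\eta\}$ via $L_\eta$; irreducibility of $\pi$ transfers to $\lam_{|\H_\eta}$, so I can take $\H_0:=\H_\eta$. To see that $\H_0\cap\Df\neq\{0\}$, I use Proposition \ref{ppcon}(iii): $g_\eta=g_\eta^\flat$, so by the definition of $F$ we have $0\neq g_\eta\in\Df\cap\H_\eta$.

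For (ii)$\Rightarrow$(i), Proposition \ref{llrrep} (applied to the irreducible subrepresentation $\{\lam_{|\H_0},\H_0\}$) already produces a unique $e\in\H_0\cap\mc{E}'$. The key additional step is to verify that $\pi_r(e)$ is precisely the projection onto $\H_0$ (not onto some proper subspace). Two observations give this: (a) since $\pi_r(e)\in\RR_G=\LL_G'$, the range $\pi_r(e)(L^2(G))$ is a $\lam(G)$-invariant closed subspace; (b) since $\H_0$ is $\lam(G)$-invariant it is automatically $\LL_G$-invariant, and $\pi_l(\U'')\subset\LL_G$ implies
\[
\pi_r(e)(L^2(G))=\overline{\pi_l(\U'')e}\subset\H_0.
\]
Irreducibility of $\lam_{|\H_0}$ together with $e\neq 0$ then forces $\pi_r(e)(L^2(G))=\H_0$, i.e.\ $\pi_r(e)=P_{\H_0}$. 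Now Theorem \ref{ipp2} applied to the representation $\{\lam_{|\H_0},\H_0\}$ and the vector $e$ itself (noting that $g_e:=L_e e=(e|\lam(\cdot)e)=e\ast e^\flat=e$, so $\H_e=\H_0$) yields that $e$ is admissible for $\{\lam_{|\H_0},\H_0\}$. Transporting $e$ back through the equivalence $\{\pi,\H_\pi\}\cong\{\lam_{|\H_0},\H_0\}$ produces an admissible vector for $\pi$.

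The uniqueness statement $\H_0\cap\mc{E}'=\{e\}$ is already contained in Proposition \ref{llrrep}, and the fact that this $e$ is the admissible vector is precisely the step above.

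The only real subtlety (the would-be obstacle) is the irreducibility argument that upgrades the conclusion of Proposition \ref{llrrep} from "$\pi_r(e)$ is some projection in $\H_0$" to "$\pi_r(e)=P_{\H_0}$"; everything else is bookkeeping, using that $g_\eta^\flat=g_\eta$ puts $g_\eta$ in $\Df$ on one side and that an element of $\mc{E}'$ in an irreducible invariant subspace must generate the whole subspace on the other.
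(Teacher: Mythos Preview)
Your proposal is correct and follows essentially the same route as the paper's proof. The paper's argument is terser: it leaves (i)$\Rightarrow$(ii) implicit (relying on Proposition~\ref{ill}(iv) and Proposition~\ref{ppcon}(iii) exactly as you do) and, for (ii)$\Rightarrow$(i), cites Proposition~\ref{llrrep}, uses irreducibility to force $\pi_r(e)(L^2(G))=\langle\pi_l(\U'')e\rangle=\H_0$, and then simply declares $e$ admissible---your explicit computation $g_e=L_ee=e\ast e^\flat=e$ together with Theorem~\ref{ipp2} makes that last step precise.
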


\begin{proof}
By Proposition \ref{llrrep}, one has $\H_0\cap\mc{E}'=\{e\}$. Since $\{\lam_{|\H_0},\H_0\}$ is irreducible and $\pi_r(e)(L^2(G))=\la\pi_l(U'')e\ra$ is an invariant subspace of $\H_0$ for $\lam$, one must have $\H_0=\la\pi_l(U'')e\ra$ and, then, $e$ is a right self-adjoint idempotent of $L^2(G)$, which is cyclic for $\{\lam_{|\H_0},\H_0\}$. Thus, $e$ is an admissible vector for $\{\lam_{|\H_0},\H_0\}$.
\end{proof}

\begin{remark}\rm
\begin{enumerate}[(a)]
\item
If $e_1,e_2\in\mc{E}'$, then one writes $e_1\leq e_2$ if $\pi_r(e_1)\pi_r(e_2)=\pi_r(e_2)\pi_r(e_1)=\pi_r(e_1)$. An element $e\in\mc{E}'$ is said to be {\it minimal} if whenever $e_1\in\mc{E}'$ and $e_1\leq e$, then $e_1=e$.
In Theorem \ref{cor360}, being $\{\lam_{|\H_0},\H_0\}$ irreducible, the admissible vector $e$ must be minimal. See also Phillips \cite{Phi75}.
\item
In his work on integrable and proper actions on C$^*$-algebras, Rieffel \cite[Prop.8.7]{Rie04} observes that, for square-integrable irreducible representations $\{\pi,\H_\pi\}$ of a lc group $G$ and normalized $\pi$-bounded vectors $\eta$, right convolution by the coordinate functional $c_{\eta}(s):=(\eta|\pi(s)\eta)$, $s\in G$, is a projection of $L^2(G)$ onto a closed subspace $\H_0$ consisting entirely of continuous functions on which $\lam$ is unitarily equivalent to $\pi$. Obviously, $c_{\eta}$ is what we call here an admissible vector for $\{\lam_{|\H_0},\H_0\}$.
\end{enumerate}
\end{remark}

We can {\it dualize} the above discussion entirely using the modular conjugation $J$ defined in (\ref{dmc}) and the right regular representation $\rho$ given in (\ref{edrr}). This dualization will be relevant in Section \ref{sectsfw}.
The next two results are the respective dual versions of Proposition \ref{ppcon} and Theorem \ref{ipp2}.

\begin{prop}\label{ppcon1} 
Let $\eta$ be an admissible vector for $\{\pi,\H_\pi\}$. Then:
\begin{enumerate}[(i)]
\item
$\pi_l(Jg_\eta)=J\pi_r(g_\eta)J=JP_{\H_\eta}J$ is an orthogonal projection. Let us put
$$
\tilde\H_\eta:=JP_{\H_\eta}JL^2(G)\,.
$$
\item
$J$ is an antiunitary operator from $\H_\eta$ onto $\tilde\H_\eta$.
\item
$Jg_\eta\in\Ds$ and $[Jg_\eta]^\sharp=SJg_\eta=Jg_\eta$.
\item
For $f\in\H_\eta$ and $s\in G$,
$$
Jf(s)=\delta_G^{1/2}(s^{-1})(Jf|\rho(s^{-1})Jg_\eta)=[Jg_\eta\ast Jf](s)\,.
$$
\item
$Jg_\eta=[Jg_\eta]^\sharp\ast[Jg_\eta]=[Jg_\eta]\ast[Jg_\eta]^\sharp=[Jg_\eta]^\sharp$.
\end{enumerate}
\end{prop}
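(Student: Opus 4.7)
The plan is to derive (iii) and (iv) by direct computation from the definition of $J$ and the results of Proposition \ref{ppcon}, and then deduce (i), (ii) and (v) as immediate consequences. The guiding observation is that $J$ is an antiunitary involution on $L^2(G)$ intertwining $\lam$ and $\rho$ via (\ref{llcon1}), so conjugation by $J$ systematically translates the left-sided statements of Proposition \ref{ppcon} into the right-sided statements claimed here.

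First I would establish (iii). By Proposition \ref{ppcon}(iii), $g_\eta = g_\eta^\flat$, which in particular means $\overline{g_\eta(s^{-1})} = g_\eta(s)$ and $g_\eta \in \Df$. The substitution $s \mapsto s^{-1}$ gives $\int_G \delta_G(s)|Jg_\eta(s)|^2\,ds = \int_G \delta_G^{-1}(t)|g_\eta(t)|^2\,dt < \infty$, so $Jg_\eta \in \Ds$. Using $\overline{g_\eta(s^{-1})} = g_\eta(s)$, a direct pointwise computation yields $[Jg_\eta](s) = \delta_G^{-1/2}(s)g_\eta(s) = [Jg_\eta]^\sharp(s)$, which is (iii).

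Next, for (iv) I would start from the formula $f(s) = (f|\lam(s)g_\eta)$ derived in the proof of Proposition \ref{ppcon}(ii). Taking complex conjugates and replacing $s$ by $s^{-1}$ yields $\overline{f(s^{-1})} = \int_G \overline{f(t)}\,g_\eta(st)\,dt$, so $Jf(s) = \delta_G^{-1/2}(s) \int_G \overline{f(t)}\,g_\eta(st)\,dt$. To rewrite this as $\delta_G^{1/2}(s^{-1})(Jf|\rho(s^{-1})Jg_\eta)$, I would expand the inner product using $[\rho(s^{-1})Jg_\eta](t) = \delta_G^{1/2}(s^{-1})Jg_\eta(ts^{-1})$ and the definition of $J$, then apply the substitution $t \mapsto t^{-1}$; the resulting $\delta_G^{\pm 1/2}$ factors combine to reproduce the previous integral. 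The second equality $\delta_G^{1/2}(s^{-1})(Jf|\rho(s^{-1})Jg_\eta) = [Jg_\eta \ast Jf](s)$ is then obtained by direct expansion of the convolution together with the left-invariant substitution $t \mapsto sv$. The main obstacle is precisely this modular-function bookkeeping: one must track several $\delta_G^{\pm 1/2}$ factors through a chain of inversion and translation substitutions and verify that they combine correctly.

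With (iii) and (iv) in hand, the remaining items follow quickly. For (i), the same type of change-of-variable computation carried out for (iv) applies to arbitrary $f \in L^2(G)$ and shows that $J\pi_r(g_\eta)Jf = Jg_\eta \ast f$; since $\pi_r(g_\eta) = P_{\H_\eta}$ is bounded by Theorem \ref{ipp2}, it follows that $Jg_\eta \in \B$ and $\pi_l(Jg_\eta) = J\pi_r(g_\eta)J = JP_{\H_\eta}J$, which is an orthogonal projection because $J$ is an antiunitary involution. Statement (ii) then follows from $\tilde\H_\eta = JP_{\H_\eta}J\,L^2(G) = J\H_\eta$ and the antiunitarity of $J$. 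For (v), the outer equality $Jg_\eta = [Jg_\eta]^\sharp$ is part of (iii), while applying (iv) with $f = g_\eta \in \H_\eta$ gives $Jg_\eta = Jg_\eta \ast Jg_\eta$; in view of $[Jg_\eta]^\sharp = Jg_\eta$, both middle equalities in (v) reduce to this identity.
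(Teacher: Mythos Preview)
Your proposal is correct and follows essentially the same strategy as the paper: both dualize Proposition \ref{ppcon} through the antiunitary involution $J$ and the intertwining relation (\ref{llcon1}), with (v) read off from (iii) and (iv). The only cosmetic differences are that the paper proves the items in the stated order, obtains (iii) via the operator identities $S=\Delta^{-1/2}J$, $F=\Delta^{1/2}J$ rather than by your pointwise computation, and for (i) verifies only that $JP_{\H_\eta}J$ is a projection while tacitly relying on the Tomita--Takesaki formula $\pi_l(Jg)=J\pi_r(g)J$ for $g\in\U'$, which you instead recover by direct calculation.
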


\begin{proof}
(i) Since $JJ=I$, one has 
$$
[JP_{\H_\eta}J]^2=JP_{\H_\eta}JJP_{\H_\eta}J=JP_{\H_\eta}J\,.
$$
Moreover, for $f,g\in L^2(G)$,
$$
(JP_{\H_\eta}Jf|g)=(Jg|P_{\H_\eta}Jf)=(P_{\H_\eta}Jg|Jf)=(f|JP_{\H_\eta}Jg)\,,
$$
that is, $[JP_{\H_\eta}J]^*=JP_{\H_\eta}J$.

(ii) For $f\in \H_\eta$ one has $P_{\H_\eta}f=f$ and
$$
Jf=JP_{\H_\eta}f=JP_{\H_\eta}JJf=P_{\tilde\H_\eta}Jf\,.
$$
Now interchange the roles of $\H_\eta$ and $\tilde\H_\eta$.

(iii) Since $S=\Delta^{-1/2}J$, $F=\Delta^{1/2}J$ and $Fg_\eta=g_\eta$, 
$$
SJg_\eta=\Delta^{-1/2}g_\eta=\Delta^{-1/2}Fg_\eta=Jg_\eta\,.
$$

(iv) By (\ref{llcon1}) and Proposition \ref{ppcon}.(ii), for $f\in\H_\eta$ and $s\in G$,
\begin{eqnarray*}
\delta_G^{1/2}(s^{-1})(Jf|\rho(s^{-1})Jg_\eta)&=&\delta_G^{-1/2}(s)(J\rho(s^{-1})Jg_\eta|f)\\
&=&\delta_G^{-1/2}(s)(\lam(s^{-1})g_\eta|f)\\
&=&\delta_G^{-1/2}(s)\overline{f(s^{-1})}=Jf(s)\,.
\end{eqnarray*}
On the other hand, using (\ref{emf}) and Proposition \ref{ppcon}.(iii), for $f\in\H_\eta$ and $s\in G$,
\begin{eqnarray*}
\delta_G^{1/2}(s^{-1})(Jf|\rho(s^{-1})Jg_\eta)=
\ds\delta_G^{1/2}(s^{-1})\int_G Jf(t)\overline{\rho(s^{-1})Jg_\eta(t)}\,dt\\
=\ds\delta_G^{1/2}(s^{-1})\int_G Jf(t)\overline{\delta^{1/2}(s^{-1})Jg_\eta(ts^{-1})}\,dt\\
=\ds\int_G Jf(t)\overline{Jg_\eta(ts^{-1})}\,d(ts^{-1})=
\int_G Jf(ts)\overline{Jg_\eta(t)}\,d(t)\\
\ds=\int_G Jf(ts)\overline{SJg_\eta(t)}\,d(t)=
\int_G Jf(ts)\delta_G(t^{-1})Jg_\eta(t^{-1})\,d(t)\\
\ds=\int_G Jf(ts)Jg_\eta(t^{-1})\,d(t^{-1})=
\int_G Jf(t^{-1}s)Jg_\eta(t)\,d(t)\\
=[Jg_\eta\ast Jf](s)\,.
\end{eqnarray*}

(v) The result follows from items (iii) and (iv).
\end{proof}

\begin{theor}\label{ipp2d}
The following are equivalent:
\begin{enumerate}[(i)]
\item 
$\eta$ is an admissible vector for $\{\pi,\H_\pi\}$.
\item
$Jg_\eta\in \mc{E}''$ and $\pi_l(Jg_\eta)=P_{J\H_\eta}$, where $P_{J\H_\eta}$ denotes the orthogonal projection from $L^2(G)$ onto $J\H_\eta$.
\end{enumerate}
\end{theor}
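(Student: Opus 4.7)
The theorem is the $J$-dual of Theorem \ref{ipp2}, and I would reduce each implication to it using the identities gathered in Proposition \ref{ppcon1}. For (i)$\Rightarrow$(ii), assume $\eta$ admissible. Theorem \ref{ipp2} gives $g_\eta\in\mc{E}'$ and $\pi_r(g_\eta)=P_{\H_\eta}$, and Proposition \ref{ppcon1} then does all the work: part (iii) yields $Jg_\eta\in\Ds$ with $[Jg_\eta]^\sharp=Jg_\eta$, part (v) yields $[Jg_\eta]\ast[Jg_\eta]=Jg_\eta$, and part (i) says $\pi_l(Jg_\eta)=JP_{\H_\eta}J$, which is a bounded orthogonal projection, hence $Jg_\eta\in\B$. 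Combining, $Jg_\eta\in\B\cap\Ds=\U''$ is a left self-adjoint idempotent and $\pi_l(Jg_\eta)=JP_{\H_\eta}J=P_{J\H_\eta}$, as required.

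For (ii)$\Rightarrow$(i) the strategy is to unwind the $J$-dual hypothesis and land on the hypotheses of Theorem \ref{ipp2}(ii) for $g_\eta$. First I would use the factorizations $S=\Delta^{-1/2}J$ and $F=\Delta^{1/2}J$ (immediate from the explicit formulas for $S$, $F$, $J$): the identity $[Jg_\eta]^\sharp=Jg_\eta$ rewrites as $\Delta^{-1/2}g_\eta=Jg_\eta$, i.e.\ $g_\eta=Fg_\eta$, so $g_\eta\in\Df$ with $g_\eta^\flat=g_\eta$. Next, the cross-identity $J\pi_l(h)J=\pi_r(Jh)$, valid on $C_c(G)$ by direct computation and extended by density, applied to $h=Jg_\eta$ gives $\pi_r(g_\eta)=J\pi_l(Jg_\eta)J=JP_{J\H_\eta}J=P_{\H_\eta}$ as a bounded operator, so $g_\eta\in\B'\cap\Df=\U'$. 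The projection identity $\pi_r(g_\eta)^2=\pi_r(g_\eta)$ reads $\pi_r(g_\eta\ast g_\eta)=\pi_r(g_\eta)$, and injectivity of $\pi_r$ on $\U'$ (an approximate unit in $C_c(G)$ recovers any $g$ from $f\ast g$) yields $g_\eta\ast g_\eta=g_\eta$. Hence $g_\eta\in\mc{E}'$ with $\pi_r(g_\eta)=P_{\H_\eta}$, and Theorem \ref{ipp2} concludes that $\eta$ is admissible.

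The main obstacle is the cross-algebra identity $J\pi_l(\cdot)J=\pi_r(J\cdot)$ needed in (ii)$\Rightarrow$(i): Proposition \ref{ppcon1}.(i) only asserts it for $g_\eta$ coming from an already known admissible vector, so here one must either invoke it as a standard feature of the modular theory of the full left Hilbert algebra $\U''$ (cf.\ \cite[Ch.\ VI]{TII}, \cite{Phi73}) or verify it by hand on $C_c(G)$; the hand verification is a routine bookkeeping of $\delta_G$-factors together with the left-invariance substitution $t'=st$ in the defining integrals, after which everything follows formally.
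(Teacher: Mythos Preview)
Your proposal is correct and essentially coincides with the paper's own argument. The paper's proof is two lines: it observes that one can either mimic the proof of Theorem~\ref{ipp2} using Proposition~\ref{ppcon1}(ii),(iv), or deduce the result directly from Theorem~\ref{ipp2} together with the Tomita--Takesaki formulas $\pi_r(Jg)=J\pi_l(g)J$ for $g\in\U''$ and $\pi_l(Jg)=J\pi_r(g)J$ for $g\in\U'$, quoted from \cite[VI.1.19]{TII}. Your route is precisely this second option, and your ``main obstacle'' --- the cross-identity $J\pi_l(\cdot)J=\pi_r(J\cdot)$ --- is exactly what the paper imports from Takesaki rather than reproving; so you need not worry about the hand verification, a citation suffices.
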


\begin{proof}
The result is proved using items (ii) and (iv) of Proposition \ref{ppcon1} like in Theorem \ref{ipp2}. It is also a direct consequence of Theorem \ref{ipp2} and Tomita-Takesaki formulas
$$\begin{array}{c}
\pi_r(Jg)=J\pi_l(g)J,\quad g\in\U''\,,\\
\pi_l(Jg)=J\pi_r(g)J,\quad g\in\U'\,;
\end{array}
$$
see \cite[VI.1.19]{TII}. 
\end{proof}

\begin{remark}\label{rmrkhsd}\rm
Proposition \ref{ppcon1}.(iv) implies that $J\H_\eta$ is a {\it reproducing kernel Hilbert space} \cite{Ar50} with kernel 
$\tilde k_\eta(s,t):=\delta_G^{-1}(s)Jg_\eta(ts^{-1})$.
See Remarks \ref{rmrkhs}.b and \ref{rm345}.b.
\end{remark}

\section{Admissible vectors and standard forms}\label{sectsfw} 

Let $G$ be a lc group and let $\Delta$ and $J$ be the modular operator and  modular conjugation defined, respectively, by (\ref{dmo}) and (\ref{dmc}).
Let $\{\pi,\H_\pi\}$ be a unitary representation of $G$ with an admissible vector $\eta$ and, as in Section \ref{sectcha}, Eq.(\ref{enave}), let us put $g_\eta:=L_\eta\eta$ and $\H_\eta:=L_\eta\H_\pi$.

Since $\H_\eta$ is invariant under $\lam$ and $J\H_\eta$ is invariant under $\rho$, one has for the corresponding projections that
$P_{\H_\eta}\in \RR_G$ and $P_{J\H_\eta}\in \LL_G$,
where $\LL_G$ and $\RR_G$ are the left and right von Neumann algebras of $G$ given in (\ref{dflvna}).
Let us consider the {\it reduced von Neumann algebras}\footnote{\label{fnrvna} Given a von Neumann algebra $\M$ acting on a Hilbert space $\H$, if $P\in\M$ is a projection, the {\it reduced von Neumann algebra} $\M_P$ is the set of all $A\in\M$ such that $PA=AP=A$.  $\M_P$ is a von Neumann algebra on $P\H$ and its commutant is the induced von Neumann algebra $[\M']_P$  whose elements are all the restrictions to $P\H$ of elements of $\M'$.} generated, respectively, by $\lam$ on $\H_\eta$ and by $\rho$ on $J\H_\eta$: 
$$
\LL_\eta:=\big\{\lam(s)_{|\H_\eta}:s\in G\big\}'',\quad 
\RR_\eta:=\big\{\rho(s)_{|J\H_\eta}:s\in G\big\}''\,. 
$$
Obviously,
$J\LL_\eta J=\RR_\eta$, $J\RR_\eta J=\LL_\eta$ and 
$$
\H_\eta=\la\LL_\eta g_\eta\ra=\la\LL_Gg_\eta\ra,\quad 
J\H_\eta=\la\RR_\eta Jg_\eta\ra=\la\RR_G Jg_\eta\ra\,. 
$$

Now, let us consider the mutually dual\footnote{\label{fnsdc} Recall that for a convex cone $\mf{P}$ in $L^2(G)$, the {\it dual cone} $\mf{P}^\circ$ is defined by $\mf{P}^\circ:=\{g\in L^2(G):(f|g)\geq0 \text{ for }f\in\mf{P}\}$. If $\mf{P}=\mf{P}^\circ$, then $\mf{P}$ is called {\it self-dual}.} pointed convex cones in $L^2(G)$
$$
\Pff:=\big\{g\ast g^\flat:g\in \U'\big\}^-,\quad
\Pss:=\big\{g\ast g^\sharp:g\in \U''\big\}^-\,, 
$$
where the bar means the closure. Then,
\be\label{dfsdcc}
\mf{P}:=(\Delta^{-1/4}\Pff)^-=(\Delta^{1/4}\Pss)^-
\ee
is a self-dual closed convex cone of $L^2(G)$. 
Moreover, every element of $L^2(G)$ is represented as a linear combination of four vectors of $\mf{P}$ and to each positive form $\om$ in the predual $[\LL_G]_*$ there corresponds a unique $g\in \mf{P}$ with $\om=\om_g$, i.e.,
$$
\om(A)=\om_g(A):=(Ag|g),\quad A\in\LL_G\,.
$$
The  quadruple $\big\{\LL_G,L^2(G),J,\mf{P}\big\}$ is a {\it standard form}\footnote{Standard forms were introduced by Haagerup \cite{Haage75}. Every von Neumann algebra $\M$ can be represented on a Hilbert space $\H$ in which it is standard. In such standard form every $\om$ in the predual $\M_*$ is of the form $\om =\om_{\xi,\eta}$, i.e., $\om(A)=\om_{\xi,\eta}(A):=(A\xi|\eta)_\H$, $A\in\M$, for certain $\xi,\eta\in \H$;
furthermore, every automorphism $\alpha$ of $\M$ is implemented by a unique 
unitary operator $U$ defined on $\H$, that is, $\alpha(A)=UAU^*$, $A\in\M$, such that $UJ=JU$ and $U\mf{P}\subset\mf{P}$. See also \cite[Sect.IX.1]{TII}. 
} of the von Neumann algebra $\LL_G$, that is, the following requirements are satisfied: 
\begin{enumerate}[(i)]
\item
$J\LL_GJ=\LL_G'=\RR_G$, 
\item
$JAJ=A^*$, for $A\in \LL_G\cap\RR_G$, 
\item
$Jg=g$, for $g\in\mf{P}$, 
\item
$AJAJ\mf{P}\subset\mf{P}$, for $A\in\LL_G$. 
\end{enumerate}

Our next objective is Theorem \ref{thcon3}, the main result of the work. It says that for each admissible vector $\eta$ the object of interest is not $g_\eta$ nor $Jg_\eta$, but $\Delta^{-1/4}g_\eta=\Delta^{1/4}Jg_\eta=J\Delta^{-1/4}g_\eta$, which satisfies notable properties. Among other things, this function is cyclic and separating for the corresponding reduced left and right von Neumann algebras, determines standard forms for such algebras and defines a faithful finite normal weight on them. 

\begin{lemma}\label{lcon3} 
Let $\eta$ be an admissible vector for $\{\pi,\H_\pi\}$. Then:
\begin{enumerate}[(i)]
\item
$g_\eta\in \Pff$ and $Jg_\eta\in \Pss$.
\item
$g_\eta\in\mc{D}(\Delta^{-1/4})$, $Jg_\eta\in\mc{D}(\Delta^{1/4})$ and
\be\label{eeamgpjg}
\Delta^{-1/4}g_\eta=\Delta^{1/4}Jg_\eta=J\Delta^{-1/4}g_\eta\,.
\ee
\item
$\la \LL_G\Delta^{-1/4}g_\eta\ra=\la \RR_G\Delta^{1/4}Jg_\eta\ra=\la \RR_G\Delta^{-1/4}g_\eta\ra$.
\end{enumerate}
\end{lemma}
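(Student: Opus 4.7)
The plan is to handle (i) directly from the earlier characterizations, (ii) by a Cauchy--Schwarz interpolation together with a pointwise computation, and (iii) by a $J$-symmetry argument built on (ii).

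For (i), Theorem~\ref{ipp2} places $g_\eta \in \U' \subset \Df$, and Proposition~\ref{ppcon}.(iii) gives $g_\eta = g_\eta \ast g_\eta^\flat$. Hence $g_\eta$ belongs literally to the generating set $\{g \ast g^\flat : g \in \U'\}$ of $\Pff$, so $g_\eta \in \Pff$ before taking closure; the dual statement $Jg_\eta \in \Pss$ follows identically from Theorem~\ref{ipp2d} and Proposition~\ref{ppcon1}.(v).

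For (ii), since $\mc{D}(\Delta^{-1/4}) = \{f \in L^2(G) : \int \delta_G^{-1/2}(s)|f(s)|^2\,ds < \infty\}$ and $g_\eta \in L^2(G) \cap \Df$, Cauchy--Schwarz gives
\[
\int \delta_G^{-1/2}|g_\eta|^2\,ds \le \Big(\int |g_\eta|^2\,ds\Big)^{1/2} \Big(\int \delta_G^{-1}|g_\eta|^2\,ds\Big)^{1/2} < \infty,
\]
so $g_\eta \in \mc{D}(\Delta^{-1/4})$; symmetrically $Jg_\eta \in L^2(G) \cap \Ds$ by Proposition~\ref{ppcon1}.(iii), yielding $Jg_\eta \in \mc{D}(\Delta^{1/4})$. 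For (\ref{eeamgpjg}) a pointwise computation suffices: Proposition~\ref{ppcon}.(iii) rewrites as $\overline{g_\eta(s^{-1})} = g_\eta(s)$, so $[Jg_\eta](s) = \delta_G^{-1/2}(s)g_\eta(s)$, and each of $\Delta^{-1/4}g_\eta$, $\Delta^{1/4}Jg_\eta$, and $J\Delta^{-1/4}g_\eta$ reduces to $s \mapsto \delta_G^{-1/4}(s)g_\eta(s)$.

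For (iii), write $h := \Delta^{-1/4}g_\eta$. The equality $\la\RR_G\Delta^{1/4}Jg_\eta\ra = \la\RR_G h\ra$ is immediate from (ii), so the substantive content is $\la\LL_G h\ra = \la\RR_G h\ra$. Combining the $J$-symmetry $Jh = h$ from (ii) with the Tomita--Takesaki relation $J\LL_G J = \RR_G$ gives
\[
J\,\la\LL_G h\ra = \la J\LL_G J \cdot Jh\ra = \la\RR_G h\ra,
\]
so the antiunitary $J$ carries one span onto the other. Promoting this to an equality of subspaces requires $\la\LL_G h\ra$ to be $J$-invariant, and this is the main obstacle: my approach is to show that the orthogonal projection onto $\la\LL_G h\ra$ lies in the center $\LL_G \cap \RR_G$ by exploiting that $h \in \mf{P}$ is the natural-cone representative of the idempotent $g_\eta \in \mc{E}' \subset \U'$ obtained in (i), using the standard-form property $AJAJ\mf{P}\subset\mf{P}$ to force the support projection to commute with $J$.
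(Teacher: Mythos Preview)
Your arguments for (i) and (ii) are correct. For (i) you match the paper. For (ii) your Cauchy--Schwarz interpolation is arguably cleaner than the paper's appeal to the inclusion $\mc{D}(\Delta^{-1/2})\subset\mc{D}(\Delta^{-1/4})$, and your pointwise verification of (\ref{eeamgpjg}) is equivalent to what the paper does (the paper derives the first equality via $F=\Delta^{1/2}J$ and the second pointwise).

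Part (iii), however, is not a proof. You correctly establish $J\la\LL_G h\ra=\la\RR_G h\ra$ and correctly isolate the obstacle as $J$-invariance of $\la\LL_G h\ra$, but what follows is only a sketch of an intention. Saying that the projection onto $\la\LL_G h\ra$ lies in the center $\LL_G\cap\RR_G$ is \emph{equivalent} to the equality you are trying to prove, so this is not progress; and the standard-form axiom $AJAJ\,\mf{P}\subset\mf{P}$ gives no obvious mechanism for forcing centrality of a support projection. In fact, for an arbitrary vector $\xi$ in the natural cone of a standard form $\{\M,\H,J,\mf{P}\}$ one does \emph{not} have $\overline{\M\xi}=\overline{\M'\xi}$ in general, so an abstract standard-form argument cannot succeed without using something specific to the present situation.

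The paper's route for (iii) is concrete rather than abstract: it exploits the fact that in the $L^2(G)$ model the modular operator is multiplication by the character $\delta_G$, so one has the pointwise commutation
\[
\lambda(s)\,\Delta^{-1/4}g_\eta=\delta_G^{1/4}(s)\,\Delta^{-1/4}\lambda(s)g_\eta,\qquad s\in G,
\]
which rewrites $\la\LL_G\Delta^{-1/4}g_\eta\ra$ as $\la\Delta^{-1/4}\lambda(s)g_\eta:s\in G\ra$. The $J$-invariance is then obtained by a direct computation on these generators using $g_\eta=g_\eta^\flat$. You should replace your final paragraph by an argument along these lines; the essential extra ingredient you are missing is precisely this commutation relation between $\lambda(s)$ and $\Delta^{-1/4}$, which has no analogue in an abstract standard form.
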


\begin{proof}
(i) follows from Proposition \ref{ppcon}.(iii) and  Proposition \ref{ppcon1}.(v).

(ii) Since $g_\eta\in\Df=\mc{D}(\Delta^{-1/2})$ and $\mc{D}(\Delta^{-1/2})$ is a core for $\Delta^{-1/4}$ (see e.g. \cite[2.7.7]{KRI}), then $g_\eta\in\mc{D}(\Delta^{-1/4})$. In a similar way, $Jg_\eta\in\Ds=\mc{D}(\Delta^{1/2})$ implies $Jg_\eta\in\mc{D}(\Delta^{1/4})$. 
Now, since $g_\eta=g_\eta^\flat=Fg_\eta$ (Proposition \ref{ppcon}.iii) and $F=\Delta^{1/2}J$ \cite[VI.1.5]{TII},
$$
\Delta^{-1/4}g_\eta=\Delta^{-1/4}Fg_\eta=\Delta^{-1/4}\Delta^{1/2}Jg_\eta=\Delta^{1/4}Jg_\eta\,,
$$
$$
[J\Delta^{-1/4}g_\eta](t)=\delta_G^{-1/2}(t)\delta_G^{-1/4}(t^{-1})\overline{g_\eta(t^{-1})}=[\Delta^{-1/4}g_\eta](t)\,.
$$

(iii) Since $J$ is an (antilinear) isometry and $J^2=I$, one has, by (\ref{llcon1}) and (\ref{eeamgpjg}), 
\begin{eqnarray*}
J\la \LL_G\Delta^{-1/4}g_\eta\ra
&=&J\la \{\lam(s)\Delta^{-1/4}g_\eta:s\in G\}\ra\\
&=&\la \{J\lam(s)JJ\Delta^{-1/4}g_\eta:s\in G\}\ra\\
&=&\la \{\rho(s)J\Delta^{-1/4}g_\eta:s\in G\}\ra\\
&=&\la \{\rho(s)\Delta^{-1/4}g_\eta:s\in G\}\ra\\
&=&\la \{\rho(s)\Delta^{1/4}Jg_\eta:s\in G\}\ra\,,
\end{eqnarray*}
that is,
\be\label{eqlrs}
J\la \LL_G\Delta^{-1/4}g_\eta\ra=\la \RR_G\Delta^{1/4}Jg_\eta\ra=\la \RR_G\Delta^{-1/4}g_\eta\ra\,.
\ee
On the other hand, by (ii), $g_\eta\in\mc{D}(\Delta^{-1/4})$ and, thus, for $s\in G$, the function $t\in G\mapsto \delta_G^{-1/4}(s^{-1}t)g_\eta(s^{-1}t)$ is in $L^2(G)$. But
\begin{eqnarray*}
\delta_G^{-1/4}(s^{-1}t)g_\eta(s^{-1}t)&=&\delta_G^{1/4}(s)\delta_G^{-1/4}(t)g_\eta(s^{-1}t)\\
&=&\delta_G^{1/4}(s)[\Delta^{-1/4}\lam(s)g_\eta](t)
\end{eqnarray*}
and this implies that $\lam(s)g_\eta\in \mc{D}(\Delta^{-1/4})$ and 
$$
\lam(s)\Delta^{-1/4}g_\eta=\delta_G^{1/4}(s)\Delta^{-1/4}\lam(s)g_\eta,\quad s\in G\,.
$$
Thus, 
\begin{eqnarray*}
\la \LL_G\Delta^{-1/4}g_\eta\ra&=&\la \{\lam(s)\Delta^{-1/4}g_\eta:s\in G\}\ra\\
&=&\la \{\Delta^{-1/4}\lam(s)g_\eta:s\in G\}\ra
\end{eqnarray*}
and, then, using again that $g_\eta=g_\eta^\flat$,
\begin{eqnarray*}
J\la \LL_G\Delta^{-1/4}g_\eta\ra
&=&\la \{J\Delta^{-1/4}\lam(s)g_\eta:s\in G\}\ra\\
&=&\la \{\Delta^{-1/2}\Delta^{1/4}\lam(s)g_\eta:s\in G\}\ra\\
&=&\la \{\Delta^{-1/4}\lam(s)g_\eta:s\in G\}\ra\\
&=&\la \LL_G\Delta^{-1/4}g_\eta\ra\,.
\end{eqnarray*}
This equality, together with (\ref{eqlrs}), lead to the result.
\end{proof}

Thus, if $\eta$ is an admissible vector for $\{\pi,\H_\pi\}$, according to Lemma \ref{lcon3}.iii, we can consider the closed subspace $\hat\H_\eta$ of $L^2(G)$ defined by
\be\label{dfhhe}
\hat\H_\eta:=\la \LL_G\Delta^{-1/4}g_\eta\ra=\la \RR_G\Delta^{-1/4}g_\eta\ra\,.
\ee
Obviously, the corresponding orthogonal projection $P_{\hat\H_\eta}$ from $L^2(G)$ onto $\hat\H_\eta$ belongs to the center $\LL_G\cap\RR_G$ and the elements of the reduced von Neumann algebras $\hat\LL_\eta$ and $\hat\RR_\eta$ of $\LL_G$ and $\RR_G$ to $\hat\H_\eta$ (see footnote \ref{fnrvna}) are just the restrictions to $\hat\H_\eta$ of elements of $\LL_G$ and $\RR_G$, i.e.,
\be\label{dfrvnahhe}
\begin{array}{l}
\hat\LL_\eta:=\big\{\lam(s)_{|\hat\H_\eta}:s\in G\big\}''=[\LL_G]_{P_{\hat\H_\eta}}=\LL_G{P_{\hat\H_\eta}}\,,\\ 
\hat\RR_\eta:=\big\{\rho(s)_{|\hat\H_\eta}:s\in G\big\}''=[\RR_G]_{P_{\hat\H_\eta}}=\RR_G{P_{\hat\H_\eta}}\,. 
\end{array}
\ee
Both (reduced) von Neumann algebras $\hat\LL_\eta$ and $\hat\RR_\eta$ act on $\hat\H_\eta$ and, by definition, $[\hat\LL_\eta]'=\hat\RR_\eta$.
Moreover, the definition of $\hat\H_\eta$ also implies that $\Delta^{-1/4}g_\eta$ is a cyclic vector for both $\hat\LL_\eta$ and $\hat\RR_\eta$.
Recall that, given a von Neumann algebra $\M$ acting on a Hilbert space $\H$,  an element $\xi\in\H$ is called a {\it separating vector} for $\M$ if, for any $A\in\M$, $A\xi=0$ implies $A=0$. It is well-known that $\xi$ is a separating vector for $\M$ if and only if $\xi$ is a cyclic vector for $\M'$; see e.g. \cite[Prop.2.5.3]{BR87}. Thus, $\Delta^{-1/4}g_\eta$ is a cyclic and separating vector 
of $\hat\H_\eta$ for both $\hat\LL_\eta$ and $\hat\RR_\eta$.

These results and additional ones are included in the next theorem.

\begin{theor}\label{thcon3} 
Let $\eta$ be an admissible vector for $\{\pi,\H_\pi\}$. Let $\hat\H_\eta$ be the closed subspace of $L^2(G)$ defined by (\ref{dfhhe}) and let $\hat\LL_\eta$ and $\hat\RR_\eta$ be the reduced von Neumann algebras  of $\LL_G$ and $\RR_G$ to $\hat\H_\eta$ given in (\ref{dfrvnahhe}). Then:
\begin{enumerate}[(i)]
\item
$P_{\hat\H_\eta}\in\LL_G\cap\RR_G$.
\item
$[\hat\LL_\eta]'=\hat\RR_\eta$.
\item
$\Delta^{-1/4}g_\eta$ is a cyclic and separating vector 
of $\hat\H_\eta$ for both $\hat\LL_\eta$ and $\hat\RR_\eta$.
\item
$[\pi_r(\Delta^{-1/4}g_\eta)]_{|\hat\H_\eta}=I_{|\hat\H_\eta}$. If, in addition, $[\Delta^{-1/4}g_\eta]^\flat=\Delta^{1/4}g_\eta$ belongs to $\hat\H_\eta$, then $\pi_r(\Delta^{-1/4}g_\eta)=P_{\hat\H_\eta}$.
\item
Let $J$ be the modular conjugation in $L^2(G)$ defined by (\ref{dmc}). Then $J$ and $P_{\hat\H_\eta}$ commute. Let us consider
$$
\hat J_\eta:=J_{|\hat\H_\eta}\,.
$$ 
Then $\hat J_\eta$ is an antilinear isometry of $\hat\H_\eta$ onto itself such that $\hat J_\eta^2=I_{|\hat\H_\eta}$.
\item
Let $\mf{P}$ be the self-dual closed convex cone of $L^2(G)$ given by (\ref{dfsdcc}). Let us define
$$
\hat\mf{P}_\eta:=\mf{P}\cap\hat\H_\eta\,.
$$
Then $\hat\mf{P}_\eta$ is a self-dual closed convex cone of $\hat\H_\eta$ and
$$
\hat\mf{P}_\eta=\big\{[\R^+g_\eta-\mf{P}]\cap\mf{P}\big\}^-\,.
$$
\item
$\big\{\hat\LL_\eta,\hat\H_\eta,\hat J_\eta,\hat\mf{P}_\eta\big\}$ is a  standard form of the von Neumann algebra $\hat\LL_\eta$.
\end{enumerate}
\end{theor}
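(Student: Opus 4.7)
The plan is to address items (i)--(vii) in order, leveraging Lemma \ref{lcon3}, the Tomita--Takesaki identities of Section \ref{sectcha}, and the general theory of reductions of von Neumann algebras and standard forms by a central projection. For (i), Lemma \ref{lcon3}.iii shows that $\hat\H_\eta$ is simultaneously $\LL_G$- and $\RR_G$-invariant; since both algebras are self-adjoint, $P_{\hat\H_\eta}$ commutes with each and therefore lies in $\LL_G'\cap\RR_G'=\RR_G\cap\LL_G$. Part (ii) is then the standard identification $[\LL_G P]'=\RR_G P$ on $P\H$, valid for any central projection $P$ of $\LL_G$. For (iii), the vector $\Delta^{-1/4}g_\eta$ is cyclic for both $\hat\LL_\eta$ and $\hat\RR_\eta$ by the very definition of $\hat\H_\eta$; since these two algebras are mutual commutants by (ii), cyclicity for one is equivalent to being separating for the other.

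The crucial ingredient for (iv) is that the modular function $\delta_G$ is a group homomorphism, which makes the multiplication operator $\Delta^\alpha$ distribute across convolution: a direct change of variables gives $\Delta^\alpha(f\ast g)=\Delta^\alpha f\ast\Delta^\alpha g$ whenever all three pieces make sense. Applied with $\alpha=-1/4$ to the identity $g_\eta\ast g_\eta=g_\eta$ (a consequence of $g_\eta=g_\eta^\flat$ and Proposition \ref{ppcon}.iii) this yields $h\ast h=h$ where $h:=\Delta^{-1/4}g_\eta$. For every $A\in\LL_G$, commutativity of left multiplication by $A$ with right convolution by $h$ then gives $\pi_r(h)(Ah)=A(h\ast h)=Ah$, so $\pi_r(h)$ is the identity on the dense subspace $\LL_G h$ of $\hat\H_\eta$ and hence on $\hat\H_\eta$. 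If in addition $h^\flat=\Delta^{1/4}g_\eta\in\hat\H_\eta$, then $h\in\U'$ and $\pi_r(h)$ is a bounded self-adjoint idempotent with range $\hat\H_\eta$; Theorem \ref{ipp2} applied to $h$ as an admissible vector for $\{\lam_{|\hat\H_\eta},\hat\H_\eta\}$ then yields $\pi_r(h)=P_{\hat\H_\eta}$.

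For (v), standard form axiom (ii) gives $JP_{\hat\H_\eta}J=P_{\hat\H_\eta}^*=P_{\hat\H_\eta}$, because $P_{\hat\H_\eta}$ is self-adjoint and lies in $\LL_G\cap\RR_G$ by (i); hence $\hat J_\eta:=J_{|\hat\H_\eta}$ is a well-defined antilinear isometry of $\hat\H_\eta$ with $\hat J_\eta^2=I_{\hat\H_\eta}$. For (vi), $\hat\mf{P}_\eta=\mf{P}\cap\hat\H_\eta$ is a closed convex cone as the intersection of such; its self-duality in $\hat\H_\eta$ and the identity $\hat\mf{P}_\eta=P_{\hat\H_\eta}\mf{P}$ follow from the reduction theory of standard forms by central projections. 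The alternative description $\hat\mf{P}_\eta=\{[\R^+g_\eta-\mf{P}]\cap\mf{P}\}^-$ amounts to identifying $P_{\hat\H_\eta}$ with the central support of the positive normal form $\omega_{g_\eta}$ on $\LL_G$, so that the elements of $\mf{P}$ dominated (in the $\mf{P}$-order) by positive multiples of $g_\eta$ are precisely those supported on $\hat\H_\eta$. Part (vii) is then verified by restricting each of the four standard-form axioms from $\{\LL_G,L^2(G),J,\mf{P}\}$ to $\{\hat\LL_\eta,\hat\H_\eta,\hat J_\eta,\hat\mf{P}_\eta\}$, using parts (i)--(vi) together with $\hat\LL_\eta\cap\hat\RR_\eta=(\LL_G\cap\RR_G)P_{\hat\H_\eta}$ and the invariance of $\hat\H_\eta$ under $\LL_G$ and $\RR_G$.

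The main obstacle will be the rigorous passage in (iv) from the algebraic identity $h\ast h=h$ to the operator statement $[\pi_r(h)]_{|\hat\H_\eta}=I$ when $h\notin\B'$: one must interpret $\pi_r(h)$ as a densely defined operator and exploit the commutation with $\LL_G$ to get the restriction to $\hat\H_\eta$ bounded (in fact equal to the identity). The secondary difficulty is the explicit order-theoretic formula in (vi), which requires pinning down $P_{\hat\H_\eta}$ as the central support of $\omega_{g_\eta}$ in order to rewrite $P_{\hat\H_\eta}\mf{P}$ in the stated form.
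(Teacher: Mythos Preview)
Your treatment of (i)--(iii), the first half of (iv), (v), and (vi)--(vii) is essentially the paper's approach: (i)--(iii) reproduce the remarks preceding the theorem, your identity $\Delta^\alpha(f\ast g)=\Delta^\alpha f\ast\Delta^\alpha g$ is exactly the mechanism behind the paper's direct computation in (iv), your use of the standard-form axiom $JAJ=A^*$ for central $A$ in (v) is a clean alternative to the paper's explicit verification $J\lambda(s)\Delta^{-1/4}g_\eta=\rho(s)\Delta^{-1/4}g_\eta$, and for (vi)--(vii) both you and the paper defer to the reduction results in Takesaki \cite[IX.1.8, IX.1.11]{TII}.

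There is, however, a genuine gap in your argument for the second assertion of (iv). You write that under the extra hypothesis $h^\flat=\Delta^{1/4}g_\eta\in\hat\H_\eta$ the operator $\pi_r(h)$ is a \emph{self-adjoint} idempotent, and then invoke Theorem \ref{ipp2} with $h$ playing the role of an admissible vector. Neither step works as stated. First, $\pi_r(h)^*=\pi_r(h^\flat)$, and since $h=\Delta^{-1/4}g_\eta$ while $h^\flat=\Delta^{1/4}g_\eta$, these differ by $\Delta^{1/2}$ and are unequal whenever $G$ is nonunimodular; so self-adjointness of $\pi_r(h)$ is precisely what has to be proved, not assumed. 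Second, Theorem \ref{ipp2} concerns vectors in $\mathcal E'$, which requires $h=h^\flat$; and even if $h$ were admissible for $\{\lambda_{|\hat\H_\eta},\hat\H_\eta\}$, the theorem would produce $\pi_r(g_h)=P_{\H_h}$ with $g_h=L_hh=h\ast h^\flat$ and $\H_h=L_h\hat\H_\eta$, neither of which is $h$ or $\hat\H_\eta$ a priori.

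The paper closes this gap by a direct computation rather than by appeal to Theorem \ref{ipp2}: for any $f\in L^2(G)$ one has $f\ast h(s)=(f\,|\,\lambda(s)h^\flat)$, so if $h^\flat\in\hat\H_\eta$ then $\lambda(s)h^\flat\in\hat\H_\eta$ for every $s$ and hence $f\ast h=0$ whenever $f\in\hat\H_\eta^\perp$. Combined with $[\pi_r(h)]_{|\hat\H_\eta}=I$, this gives $\pi_r(h)=P_{\hat\H_\eta}$ (and in particular $h\in\B'$). You should replace the Theorem \ref{ipp2} invocation with this two-line calculation. Note also that the ``main obstacle'' you flag---passing from $h\ast h=h$ to the operator identity on $\hat\H_\eta$---is not where the real difficulty lies; both you and the paper handle that part correctly.
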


\begin{proof}
(i), (ii) and (iii) have been proved in the preceding comments.

(iv) Since $\pi_r(g_\eta)=P_{\H_\eta}$, one has, for $s,u\in G$,
\begin{eqnarray*}
\ds[\lam(u)\Delta^{-1/4}g_\eta]\ast[\Delta^{-1/4}g_\eta](s)\\
\ds =\int_G [\lam(u)\Delta^{-1/4}g_\eta](t)[\Delta^{-1/4}g_\eta](t^{-1}s)\,dt\\
\ds =\int_G \delta_G^{-1/4}(u^{-1}t)g_\eta(u^{-1}t)\delta_G^{-1/4}(t^{-1}s)g_\eta(t^{-1}s)\,dt\\
\ds =\delta_G^{-1/4}(u^{-1}s)[\lam(u)g_\eta]\ast g_\eta(s)\\
\ds =\delta_G^{-1/4}(u^{-1}s)[\lam(u)g_\eta](s)=[\lam(u)\Delta^{-1/4}g_\eta](s)\,.
\end{eqnarray*}
Thus, $[\pi_r(\Delta^{-1/4}g_\eta)]_{|\hat\H_\eta}=I_{|\hat\H_\eta}$.

Since $g_\eta=g_\eta^\flat$, a simple calculation leads to $[\Delta^{-1/4}g_\eta]^\flat=\Delta^{1/4}g_\eta$. If, in addition, $[\Delta^{-1/4}g_\eta]^\flat\in\hat\H_\eta$, then $\pi_r(\Delta^{-1/4}g_\eta)=P_{\hat\H_\eta}$. Indeed, 
in such case, for $f\in\hat\H_\eta^\perp$ and $s\in G$,
\begin{eqnarray*}
\ds f\ast[\Delta^{-1/4}g_\eta](s)=
\int_G f(t)[\Delta^{-1/4}g_\eta](t^{-1}s)\,dt\\
\ds=\int_G f(t)\delta_G^{-1/4}(t^{-1}s)g_\eta(t^{-1}s)\,dt\\
\ds=\int_G f(t)\delta_G^{1/4}(s^{-1}t)\overline{g_\eta^\flat(s^{-1}t)}\,dt\\
\ds=\int_G f(st)\delta_G^{1/4}(t)\overline{g_\eta^\flat(t)}\,dt=
\int_G f(st)\overline{[\Delta^{-1/4}g_\eta]^\flat(t)}\,dt\\
\ds=(\lam(s^{-1})f|[\Delta^{-1/4}g_\eta]^\flat)=(f|\lam(s)[\Delta^{-1/4}g_\eta]^\flat)=0\,.
\end{eqnarray*}

(v) Since $J^2=I$, by (\ref{llcon1}) and (\ref{eeamgpjg}), for $s\in G$,
$$
J\lam(s)\Delta^{-1/4}g_\eta=J\lam(s)JJ\Delta^{-1/4}g_\eta=\rho(s)J\Delta^{-1/4}g_\eta=\rho(s)\Delta^{-1/4}g_\eta\,,
$$
so that $J$ and $P_{\hat\H_\eta}$ commute and $\hat J_\eta=J_{|\hat\H_\eta}$ is an antilinear isometry of $\hat\H_\eta$ onto itself such that $\hat J_\eta^2=I_{|\hat\H_\eta}$.

Finally, (vi) and (vii) follow from  \cite[IX.1.8]{TII} and \cite[IX.1.11]{TII}.
\end{proof}
  
\begin{remark}\rm
A von Neumann algebra $\M$ of operators acting on a Hilbert space is said to be {\it $\sigma$-finite} if all the collections of 
mutually orthogonal projections in $\M$ have at most a countable cardinality.
It is well-known \cite[Prop.2.5.6]{BR87} that $\M$ admits a cyclic and separating vector if and only if $\M$ is $\sigma$-finite.
Thus, Theorem \ref{thcon3}.iii implies that, if $\eta$ is an admissible vector for $\{\pi,\H_\pi\}$, then the reduced von Neumann algebras $\hat\LL_\eta$ and $\hat\RR_\eta$ are both $\sigma$-finite.
\end{remark}

Finally, we include a sort of {\it orthogonality relations} in this context.

\begin{theor}\label{t9.1.12x}
Let $\eta_i$ be an admissible vector for $\{\pi_i,\H_{\pi_i}\}$, $i=1,2$.
The following conditions are equivalent: 
\begin{enumerate}[(i)]
\item
$\Delta^{-1/4}g_{\eta_1}\perp \Delta^{-1/4}g_{\eta_2}$. 
\item
$\hat\mf{P}_{\eta_1}\perp \hat\mf{P}_{\eta_2}$. 
\item
$\hat\H_{\eta_1}\perp \hat\H_{\eta_2}$. 
\end{enumerate}
\end{theor}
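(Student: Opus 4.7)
The implications (iii) $\Rightarrow$ (ii) and (ii) $\Rightarrow$ (i) are immediate from the inclusions $\Delta^{-1/4} g_{\eta_i} \in \hat{\mf{P}}_{\eta_i} \subset \hat{\H}_{\eta_i}$ supplied by Theorem \ref{thcon3}, items (iii) and (vi). The substance lies in (i) $\Rightarrow$ (iii).

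Write $\xi_i := \Delta^{-1/4} g_{\eta_i}$ and $P_i := P_{\hat{\H}_{\eta_i}}$. By Theorem \ref{thcon3}.i, both $P_i$ lie in the centre $\LL_G \cap \RR_G$ and therefore commute. My plan is to establish $P_1 \xi_2 = 0$ first, and then, using the cyclicity of $\xi_2$ for $\hat{\LL}_{\eta_2}$, promote this to $P_1 P_2 = 0$. Central projections preserve the self-dual cone: for the central projection $P_1$, the standard-form properties in Section \ref{sectsfw} give $J P_1 J = P_1^* = P_1$, whence $P_1 \mf{P} = P_1 (J P_1 J) \mf{P} \subset \mf{P}$. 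Consequently $P_1 \xi_2 \in \mf{P} \cap \hat{\H}_{\eta_1} = \hat{\mf{P}}_{\eta_1}$, and from $P_1 \xi_1 = \xi_1$ one computes
\[
(\xi_1 \,|\, P_1 \xi_2) = (P_1 \xi_1 \,|\, \xi_2) = (\xi_1 \,|\, \xi_2) = 0 .
\]

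Now I invoke Theorem \ref{thcon3}.iii and \ref{thcon3}.vii: $\xi_1$ is cyclic and separating for $\hat{\LL}_{\eta_1}$ inside the standard form $\big\{\hat{\LL}_{\eta_1}, \hat{\H}_{\eta_1}, \hat{J}_{\eta_1}, \hat{\mf{P}}_{\eta_1}\big\}$. The classical standard-form fact I need here is that a cyclic and separating $\xi \in \mf{P}$ is a quasi-interior point of $\mf{P}$; equivalently, the vector--state bijection $\hat{\mf{P}}_{\eta_1} \ni \zeta \mapsto \omega_\zeta \in (\hat{\LL}_{\eta_1})_{*}^{+}$ combined with the faithfulness of $\omega_{\xi_1}$ converts $(\xi_1 \,|\, \zeta) = 0$ into $\zeta = 0$. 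Applied to $\zeta = P_1 \xi_2 \in \hat{\mf{P}}_{\eta_1}$, this gives $P_1 \xi_2 = 0$. Since $\hat{\H}_{\eta_2} = \overline{\LL_G \xi_2}$ by cyclicity of $\xi_2$, and $P_1$ commutes with $\LL_G$ by centrality, one concludes
\[
P_1 \hat{\H}_{\eta_2} = \overline{\LL_G\, P_1 \xi_2} = \{0\},
\]
i.e., $P_1 P_2 = 0$, which is exactly $\hat{\H}_{\eta_1} \perp \hat{\H}_{\eta_2}$. The main obstacle in this plan is the appeal to the quasi-interior property of cyclic and separating vectors in $\mf{P}$; this is a classical feature of Tomita--Takesaki standard forms (cf.\ \cite{BR87}) but, if a self-contained treatment is desired, it can be extracted from the Araki vector--state correspondence available inside the standard form at hand.
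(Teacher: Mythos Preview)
Your argument is correct and is, in substance, the same approach as the paper's: the paper simply invokes \cite[IX.1.12]{TII} (the orthogonality dichotomy for vectors in the natural cone $\mf{P}$ of a standard form), after noting that $\Delta^{-1/4}g_{\eta_i}\in\mf{P}$ via Lemma~\ref{lcon3} and the Plancherel weight. Your version unpacks this citation --- making explicit the trivial implications, the invariance $P_1\mf{P}\subset\mf{P}$ for central $P_1$, and then reducing to the quasi-interior property of a cyclic--separating vector in $\mf{P}$ --- which is exactly the content of the Takesaki result the paper cites; so the two proofs coincide, yours being the expanded form of the paper's one-line reference.
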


\begin{proof}
There is a one to one correspondence between full right and left Hilbert 
algebras and faithful semi-finite normal weights \cite[Sect.VII.2]{TII}.
Here we pay attention to the full convolution left  Hilbert algebra $\U''$ of a lc group $G$ given in (\ref{dlfha}).
The {\it canonical} or{\it Plancherel weight} $\vp$ corresponding to $\U''$ is defined on the positive cone $\LL_G^+$ of $\LL_G$ by 
$$
\vp(A):=\left\{\begin{array}{ll} {||g||,} &\text{if }A^{1/2}=\pi_l(g),\,g\in\U''\,,\\
+\infty,& \text{otherwise}\,;\end{array}\right.
$$
see \cite[VII.2.5]{TII}.
The associated modular operator $\Delta$ and modular conjugation $J$ are those ones defined, respectively, in  (\ref{dmo}) and (\ref{dmc}). The set 
$\mf{P}_\vp:=\{A\in\LL_G^+:\vp(A)<\infty\}$ coincides with the cone $\mf{P}$ defined in (\ref{dfsdcc}) via the identification $\pi_l(g)\leftrightarrow g$, $g\in\U''$.
By the definition of $\mf{P}$ and Lemma \ref{lcon3}.(i), for an admissible vector $\eta$ one has $\Delta^{-1/4}g_{\eta}\in\mf{P}=\mf{P}_\vp$. Then the result is a direct consequence of \cite[IX.1.12]{TII}.
\end{proof}

\begin{remark}\rm
Phillips \cite{Phi75} studies orthogonality relations for irreducible square-integrable representations of left Hilbert algebras similar to those given by Duflo and Moore \cite{DM76} for irreducible unitary representations of nonunimodular groups. See, in particular, \cite[Th.2.4]{Phi75} and \cite[Cor.2.5]{Phi75}. See also Grossmann, Morlet and Paul \cite{GMP85}. See also the reproducing kernel Hilbert space approach given by Carey \cite{Carey76}. Additional comments on orthogonality relations can be found in Rieffel \cite[Sect.8]{Rie04}.
\end{remark}


\section*{Acknowledgments}
This work was supported by research project MTM2014-57129-C2-1-P (Secretar\'{\i}a General de Ciencia, Tecnolog\'{\i}a e Innovaci\'on, Ministerio de Econom\'{\i}a y Competitividad, Spain).


\begin{thebibliography}{10}

\bibitem{AAG14-2}
{\sc Ali, S.~T., Antoine, J.-P., and Gazeau, J.-P.}
\newblock {\em Coherent states, wavelets, and their generalizations},
  second~ed.
\newblock Theoretical and Mathematical Physics. Springer, New York, 2014.

\bibitem{AFK03}
{\sc Ali, T.~S., F\"uhr, H., and Krasowska, A.~E.}
\newblock Plancherel inversion as unified approach to wavelet transforms and
  {W}igner functions.
\newblock {\em Ann. Henri Poincar\'e 4}, 6 (2003), 1015--1050.

\bibitem{Am49}
{\sc Ambrose, W.}
\newblock The {$L_2$}-system of a unimodular group. {I}.
\newblock {\em Trans. Amer. Math. Soc. 65\/} (1949), 27--48.

\bibitem{Ar50}
{\sc Aronszajn, N.}
\newblock Theory of reproducing kernels.
\newblock {\em Trans. Amer. Math. Soc. 68\/} (1950), 337--404.

\bibitem{BHP15.1}
{\sc Barbieri, D., Hern\'andez, E., and Parcet, J.}
\newblock Riesz and frame systems generated by unitary actions of discrete
  groups.
\newblock {\em Appl. Comput. Harmon. Anal. 39}, 3 (2015), 369--399.

\bibitem{Bek04}
{\sc Bekka, B.}
\newblock Square integrable representations, von {N}eumann algebras and an
  application to {G}abor analysis.
\newblock {\em J. Fourier Anal. Appl. 10}, 4 (2004), 325--349.

\bibitem{BR87}
{\sc Bratteli, O., and Robinson, D.~W.}
\newblock {\em Operator algebras and quantum statistical mechanics. 1. $C^*$-
  and $W^*$-algebras, symmetry groups, decomposition of states}, second~ed.
\newblock Texts and Monographs in Physics. Springer-Verlag, New York, 1987.

\bibitem{Carey76}
{\sc Carey, A.~L.}
\newblock Square-integrable representations of non-unimodular groups.
\newblock {\em Bull. Austral. Math. Soc. 15}, 1 (1976), 1--12.

\bibitem{Carey77}
{\sc Carey, A.~L.}
\newblock Induced representations, reproducing kernels and the conformal group.
\newblock {\em Comm. Math. Phys. 52}, 1 (1977), 77--101.

\bibitem{Carey78}
{\sc Carey, A.~L.}
\newblock Group representations in reproducing kernel {H}ilbert spaces.
\newblock {\em Rep. Math. Phys. 14}, 2 (1978), 247--259.

\bibitem{Chr}
{\sc Christensen, O.}
\newblock {\em Frames and bases}.
\newblock Applied and Numerical Harmonic Analysis. Birkh\"auser Boston, Inc.,
  Boston, MA, 2008.
\newblock An introductory course.

\bibitem{Dix77}
{\sc Dixmier, J.}
\newblock {\em {$C\sp*$}-algebras}.
\newblock North-Holland Publishing Co., Amsterdam-New York-Oxford, 1977.

\bibitem{DM76}
{\sc Duflo, M., and Moore, C.~C.}
\newblock On the regular representation of a nonunimodular locally compact
  group.
\newblock {\em J. Functional Analysis 21}, 2 (1976), 209--243.

\bibitem{ES92}
{\sc Enock, M., and Schwartz, J.-M.}
\newblock {\em Kac algebras and duality of locally compact groups}.
\newblock Springer-Verlag, Berlin, 1992.

\bibitem{Ey64}
{\sc Eymard, P.}
\newblock L'alg\`ebre de {F}ourier d'un groupe localement compact.
\newblock {\em Bull. Soc. Math. France 92\/} (1964), 181--236.

\bibitem{F95}
{\sc Folland, G.~B.}
\newblock {\em A course in abstract harmonic analysis}.
\newblock Studies in Advanced Mathematics. CRC Press, Boca Raton, FL, 1995.

\bibitem{Fuhr03}
{\sc F\"uhr, H.}
\newblock Admissible vectors and traces on the commuting algebra.
\newblock In {\em Group 24: Physical and Mathematical Aspects of Symmetries},
  vol.~173 of {\em Institute of Physics Conference Series}. IOP Publishing,
  London, UK, 2003, pp.~867--874.

\bibitem{Fuhr05}
{\sc F\"uhr, H.}
\newblock {\em Abstract harmonic analysis of continuous wavelet transforms},
  vol.~1863 of {\em Lecture Notes in Mathematics}.
\newblock Springer-Verlag, Berlin, 2005.

\bibitem{Gil72}
{\sc Gilmore, R.}
\newblock Geometry of symmetrized states.
\newblock {\em Ann. Physics 74\/} (1972), 391--463.

\bibitem{God48}
{\sc Godement, R.}
\newblock Les fonctions de type positif et la th\'{e}orie des groupes.
\newblock {\em Trans. Amer. Math. Soc. 63\/} (1948), 1--84.

\bibitem{GMP85}
{\sc Grossmann, A., Morlet, J., and Paul, T.}
\newblock Transforms associated to square integrable group representations.
  {I}. {G}eneral results.
\newblock {\em J. Math. Phys. 26}, 10 (1985), 2473--2479.

\bibitem{Haage75}
{\sc Haagerup, U.}
\newblock The standard form of von {N}eumann algebras.
\newblock {\em Math. Scand. 37}, 2 (1975), 271--283.

\bibitem{HL00}
{\sc Han, D., and Larson, D.~R.}
\newblock Frames, bases and group representations.
\newblock {\em Mem. Amer. Math. Soc. 147}, 697 (2000), x+94.

\bibitem{KRI}
{\sc Kadison, R.~V., and Ringrose, J.~R.}
\newblock {\em Fundamentals of the theory of operator algebras. {V}ol. {I}.
  Elementary theory}, vol.~100 of {\em Pure and Applied Mathematics}.
\newblock Academic Press, Inc. [Harcourt Brace Jovanovich, Publishers], New
  York, 1983.

\bibitem{KRII}
{\sc Kadison, R.~V., and Ringrose, J.~R.}
\newblock {\em Fundamentals of the theory of operator algebras. {V}ol. {II}.
  Advanced theory}, vol.~100 of {\em Pure and Applied Mathematics}.
\newblock Academic Press, Inc., Orlando, FL, 1986.

\bibitem{Per71}
{\sc Perdrizet, F.}
\newblock \'{E}l\'{e}ments positifs relatifs \`a une alg\`ebre hilbertienne \`a
  gauche.
\newblock {\em Compositio Math. 23\/} (1971), 25--47.

\bibitem{Per72}
{\sc Perelomov, A.~M.}
\newblock Coherent states for arbitrary {L}ie group.
\newblock {\em Comm. Math. Phys. 26\/} (1972), 222--236.

\bibitem{Per86}
{\sc Perelomov, A.~M.}
\newblock {\em Generalized coherent states and their applications}.
\newblock Texts and Monographs in Physics. Springer-Verlag, Berlin, 1986.

\bibitem{Phi73}
{\sc Phillips, J.}
\newblock Positive integrable elements relative to a left {H}ilbert algebra.
\newblock {\em J. Functional Analysis 13\/} (1973), 390--409.

\bibitem{Phi75}
{\sc Phillips, J.}
\newblock A note on square-integrable representations.
\newblock {\em J. Functional Analysis 20}, 1 (1975), 83--92.

\bibitem{Rie69}
{\sc Rieffel, M.~A.}
\newblock Square-integrable representations of {H}ilbert algebras.
\newblock {\em J. Functional Analysis 3\/} (1969), 265--300.

\bibitem{Rie04}
{\sc Rieffel, M.~A.}
\newblock Integrable and proper actions on {$C^*$}-algebras, and
  square-integrable representations of groups.
\newblock {\em Expo. Math. 22}, 1 (2004), 1--53.

\bibitem{Segal50}
{\sc Segal, I.~E.}
\newblock An extension of {P}lancherel's formula to separable unimodular
  groups.
\newblock {\em Ann. of Math. (2) 52\/} (1950), 272--292.

\bibitem{Stet07}
{\sc Stetk\ae{r}, H.}
\newblock Representations, convolution square roots and spherical vectors.
\newblock {\em J. Funct. Anal. 252}, 1 (2007), 1--33.

\bibitem{Tak70}
{\sc Takesaki, M.}
\newblock {\em Tomita's theory of modular {H}ilbert algebras and its
  applications}.
\newblock Lecture Notes in Mathematics, Vol. 128. Springer-Verlag, Berlin-New
  York, 1970.

\bibitem{TII}
{\sc Takesaki, M.}
\newblock {\em Theory of operator algebras. {II}}, vol.~125 of {\em
  Encyclopaedia of Mathematical Sciences}.
\newblock Springer-Verlag, Berlin, 2003.

\end{thebibliography}

\end{document}